\newtheorem{theorem}{Theorem}[section]
\newtheorem{lemma}[theorem]{Lemma}
\newtheorem{corollary}[theorem]{Corollary}
\newtheorem{proposition}[theorem]{Proposition}
\theoremstyle{definition}
\newtheorem{example}[theorem]{Example}
\newtheorem{remark}[theorem]{Remark}
\numberwithin{equation}{section}
\begin{document}

\title{Harmonic maps between annuli on Riemann surfaces}
\subjclass{58E20,30F45}

\keywords{Harmonic maps, Riemann surfaces, Modulus of annuli, Gauss
map}

\author{David Kalaj}
\address{University of Montenegro, Faculty of Natural Sciences and
Mathematics, Cetinjski put b.b. 81000 Podgorica, Montenegro}
\email{davidk@t-com.me}
\begin{abstract}
Let $\rho_\Sigma=h(|z|^2)$ be a metric in a Riemann surface
$\Sigma$, where $h$ is a positive real function. Let $\mathcal
H_{r_1}=\{w=f(z)\}$ be the family of univalent $\rho_\Sigma$
harmonic mapping of the Euclidean annulus $A(r_1,1):=\{z:r_1< |z|
<1\}$ onto a proper annulus $A_\Sigma$ of the Riemann surface
$\Sigma$, which is subject of some geometric restrictions. It is
shown that if $A_{\Sigma}$ is fixed, then $\sup\{r_1: \mathcal
H_{r_1}\neq \emptyset \}<1$. This generalizes the similar results
from Euclidean case. The cases of Riemann and of hyperbolic harmonic
mappings are treated in detail. Using the fact that the Gauss map of
a surface with constant mean curvature (CMC) is a Riemann harmonic
mapping, an application to the CMC surfaces is given (see
Corollary~\ref{cor}). In addition some new examples of hyperbolic
and Riemann radial harmonic diffeomorphisms are given,  which have
inspired some new J. C. C. Nitsche type conjectures for the class of
these mappings.
\end{abstract}
\maketitle

\tableofcontents

\section{Introduction and preliminaries}

It is well known that an annulus $A(r_1,1):=\{z:r_1 < |z| < 1\}$ can
be mapped conformally onto an annulus $A(\varrho,1)=\{w:\varrho <
|w| < 1\}$ if and only if $r_1= \varrho$, i.e. if they have the same
modulus. If $f$ is $K$-quasiconformal, then $r_1^K \le \varrho \le
r_1^{1/K}$ \cite[p. 38]{lv}. However, if $f$ is neither conformal
nor quasiconformal, then $\varrho$ is possibly zero, as with the
harmonic mapping $$f(z) = \frac{z - r_1^2/\bar z}{1 - r_1^2};$$
which can easily be shown to map $A(r_1, 1)$ univalently onto the
punctured unit disc $A(0, 1)$.

J. C. C. Nitsche \cite{n} by considering the complex-valued
univalent harmonic functions
$$f(z)=\frac{r_1\varrho-1}{r_1^2-1}z+\frac{r_1^2-r_1\varrho}{r_1^2-1}\frac1{\overline
z},$$ showed that an annulus $r_1 < |z| < 1$ can then be mapped onto
any annulus $\varrho < |w| < 1$ with
\begin{equation}\label{nitsche}\varrho\leq\frac{2r_1} {1+r_1^2}.\end{equation}
J. C. C. Nitsche  conjectured that, condition \eqref{nitsche} is
necessary as well.

He also showed that $\varrho\leq\varrho_0$ for some constant
$\varrho_0=\varrho_0(r_1)<1$. Thus although the annulus $r_1 < |z| <
1$ can be mapped harmonically onto a punctured disk, it cannot be
mapped onto any annulus that is ``too thin''. A. Lyzzaik \cite{Al}
recently gave a quantitative bound for $\varrho_0$, showing that
$\varrho_0\leq s$ if the annulus $r_1 < |z| < 1$ is conformally
equivalent to the Gr\"otzsch domain consisting of the unit disk
minus the radial slit $0\leq x \leq s$. Weitsman in \cite{aw} showed
that
$$\varrho\leq\frac{1}{1 + \tfrac12(r_1\log r_1)^2},$$ an improvement on
Lyzzaik's result when $\varrho$ is near $1$. The author in
\cite{israel} improved Weitsman's bound for all $\varrho$ showing
that
$$\varrho\le \frac 1{1+\tfrac 12\log^2r_1}.$$

Very recently, in \cite{iko} is proved the Nitsche conjecture when
the domain annulus is not too wide; explicitly, when $\log
\frac{1}{r_1} \le {3/2}$. For general $A(r_1,1)$ the conjecture is
proved under the additional assumption that either $h$ or its normal
derivative have vanishing average on the inner boundary circle.

In general, however, this remains an attractive unsettled problem.
See also \cite{aimo}, \cite{bh}, \cite{sco}, \cite{mn} and
\cite{jmaa} for a related topic.

Zheng-Chao Han in \cite{h} obtained a
similar result for hyperbolic harmonic mappings from the unit disk
onto the half-plane.

In this paper we consider the situation when the metric is
negatively curved or is positively curved uniformly. Notice that the
Euclidean metric has a vanishing Gauss curvature.

By $\Bbb U$ we denote the unit disk $\{z:|z| <1\}$, by
$\overline{\Bbb C}$ is denoted the extended complex plane.  Let
$\Sigma$ be a Riemannian surface over a domain $C$ of the complex
plane or over $\overline{\Bbb C}$ and let $p: C\mapsto \Sigma$ be a
universal covering. Let $\rho_\Sigma$ be a conformal metric defined
in the universal covering domain $C$ or in some chart $D$ of
$\Sigma$. It is well-known that $C$ can be one of three sets: $\Bbb
U$, $\Bbb C$ and $\overline{\Bbb C}$. Then the distance function is
defined by

$$d(a,b) = \inf_{a,b\in
\gamma}\int_0^1\rho_\Sigma(\tilde\gamma(t))|\tilde\gamma'(t)|dt,$$
where $\tilde \gamma$, $\tilde\gamma(0)=0$, is a lift of $\gamma$,
i.e. $p(\tilde\gamma(t)) = \gamma(t)$, $\gamma(0) = a$, $\gamma(1) =
b$.

The Gauss curvature of the surface (and of the metric $\rho_\Sigma$)
is given by

$$K = -\frac{\Delta \log \rho_\Sigma}{\rho_\Sigma^2}.$$

In this paper we will consider those surfaces $\Sigma$, whose metric
have the form

$$\rho_\Sigma(z) = h(|z|^2),$$ defined in some chart $D$ of $\Sigma$
(not necessarily in the whole universal covering surface). Here $h$
is an positive twice differentiable function. We call these metrics
radial symmetric.
\subsection{Riemann surfaces with radially symmetric metrics}
A Riemann surface does not come equipped with any particular
Riemannian metric. However, the complex structure of the Riemann
surface does uniquely determine a metric up to conformal
equivalence. (Two metrics are said to be conformally equivalent if
they differ by multiplication by a positive smooth function.)
Conversely, any metric on an oriented surface uniquely determines a
complex structure, which depends on the metric only up to conformal
equivalence. Complex structures on an oriented surface are therefore
in one-to-one correspondence with conformal classes of metrics on
that surface. Within a given conformal class, one can use conformal
symmetry to find a representative metric with convenient properties.
In particular, there is always a complete metric with constant
curvature in any given conformal class. We begin by the case of
metrics with negative curvature.
\subsubsection{Hyperbolic metrics}\label{hypo}
For every hyperbolic Riemann surface, the fundamental group is
isomorphic to a Fuchsian group, and thus the surface can be modeled
by a Fuchsian model $\Bbb U/\Gamma$, where $\Bbb U$ is the unit disk
and $\Gamma$ is the Fuchsian group (\cite{las}). If $\Omega$ is a
hyperbolic region in the Riemann sphere $\overline{\Bbb C}$; i.e.,
$\Omega$ is open and connected with its complement $\Omega^c :=
\overline{\Bbb C}\setminus \Omega$ possessing at least three points.
Each such $\Omega$ carries a unique maximal constant curvature $-1$
conformal metric $\lambda |dz| = \lambda_\Omega(z)|dz|$ referred to
as the Poincar\'e hyperbolic metric in $\Omega$. The domain
monotonicity property, that larger regions have smaller metrics, is
a direct consequence of Schwarz's Lemma. Except for a short list of
special cases, the actual calculation of any given hyperbolic metric
is notoriously difficult.

By the formula

$$\rho_\Sigma(z) = h(|z|^2),$$
we obtain that the Gauss curvature is given by
$$K = \frac{4(|z|^2{h'}^2 - |z|^2hh''-hh')}{h^4}.$$

Setting $t = |z|^2$, we obtain that

\begin{equation}K =
-\frac{1}{h^2}\left(\frac{4th'(t)}{h}\right)'.\end{equation}

As $K\le 0$ it follows that
$$\left(\frac{4th'(t)}{h}\right)'\ge 0.$$

Therefore the function $$\frac{4th'(t)}{h}$$ is increasing, i.e.

\begin{equation}\label{incre}
t\ge s \Rightarrow \frac{4th'(t)}{h(t)} \ge \frac{4sh'(s)}{h(s)},
\end{equation}

and in particular \begin{equation}\label{incre0} t\ge 0 \Rightarrow
\frac{4th'(t)}{h(t)} \ge 0.
\end{equation}

{\it In this case we obtain that $h$ is an increasing function.}

The examples of hyperbolic surfaces are:

\begin{enumerate}
\item The Poincar\'e disk $\Bbb
U$ with the hyperbolic metric $$\lambda = \frac{2}{1-|z|^2}.$$

\item The punctured hyperbolic unit disk $\Delta = \Bbb U \setminus\{0\}$. The linear density of the
hyperbolic metric on $\Delta$ is $$\lambda_\Delta =
\frac{1}{|z|\log\frac{1}{|z|}}.$$

\item\label{iteme} The hyperbolic annulus $A(1/R,R)$, $R>1$. The
hyperbolic metric is given by

$$ h_R(|z|^2)=\lambda_R(z) = \frac{\pi/2}{|z|\log R}\sec(
\frac{\pi}{2}\frac{\log|z|}{\log R}).$$

In all these cases the Gauss curvature is $K = -1$.
\end{enumerate}
\subsubsection{Riemann metrics}
In the case of the Riemann sphere, the Gauss-Bonnet theorem implies
that a constant-curvature metric must have positive curvature $K$.
It follows that the metric must be isometric to the sphere of radius
 $1/\sqrt K$ in $\mathbb{R}^3$ via stereographic projection. In the
$z$-chart on the Riemann sphere, the metric with $K = 1$ is given by
$$ds^2=h_R^2(|z|^2)|dz|^2 = \dfrac{4|dz|^2}{(1+|z|^2)^2}.$$

 Another important case is Hamilton cigar soliton or in physics is known as {\it
Wittens's black hole}. It is a K\"ahler metric defined on $\Bbb C$.
$$ds^2=h^2(|z|^2)|dz|^2 = \frac{|dz|^2}{1 + |z|^2}.$$ The Gauss curvature is given by
$$K = \frac{2}{1+|z|^2}.$$

In both these cases $K> 0$. This means that
$$\left(\frac{4th'(t)}{h}\right)'\le
0.$$

Therefore the function $$\frac{4th'(t)}{h}$$ is decreasing i.e.

\begin{equation}\label{decre}
t\ge s \Rightarrow \frac{4th'(t)}{h(t)} \le \frac{4sh'(s)}{h(s)}.
\end{equation}
{\it In this case we obtain that $h$ is a decreasing function.}

\subsubsection{The definition of harmonic mappings}

Let $(M,\sigma)$ and $(N,\rho)$ be Riemann surfaces with metrics
$\sigma$ and $\rho$, respectively. If $f:(M,\sigma)\to(N,\rho)$ is a
$C^2$, then $f$ is said to be harmonic with respect to $\rho$ if

\begin{equation}\label{el}
f_{z\overline z}+{(\log \rho^2)}_w\circ f f_z\,f_{\bar z}=0,
\end{equation}
where $z$ and $w$ are the local parameters on $M$ and $N$
respectively.

Also $f$ satisfies \eqref{el} if and only if its Hopf differential
\begin{equation}\label{anal}
\Psi=\rho^2 \circ f f_z\overline{f_{\bar z}}
\end{equation} is a
holomorphic quadratic differential on $M$.

For $g:M \mapsto N$ the energy integral is defined by

\begin{equation} E[g,\rho]=\int_{M}\rho^2\circ f
(|\partial g|^2+|\bar \partial g|^2) dV_\sigma.
\end{equation}
where $\partial g$, and $\bar \partial g$ are the partial
derivatives taken with respect to the metrics $\varrho$ and
$\sigma$, and $dV_\sigma$ is the volume element on $(M,\sigma)$.
Assume that energy integral of $f$ is bounded. Then $f$ is harmonic
if and only if $f$ is a critical point of the corresponding
functional where the homotopy class of $f$ is the range of this
functional. For this definition and the basic properties of harmonic
map see \cite{sy}.

Using the fact that the function defined in \eqref{anal} is
holomorphic, the following well known lemma can be proved (see e.g.
\cite{km}).

\begin{lemma}\label{le} Let $(S_1, \rho_1)$ and $(S_2,\rho_2)$ and
$(R,\rho)$ be three Riemann surfaces. Let $g$ be an isometric
transformation of the surface $S_1$ onto the surface $S_2$:
$$\rho_1(\omega)|d\omega|^2=\rho_2(w)|dw|^2, \, \, w=g(\omega).$$
Then $f:R\mapsto S_1$ is $\rho_1$- harmonic if and only if $g\circ
f: R \mapsto S_2$ is $\rho_2$- harmonic. In particular, if $g$ is an
isometric self-mapping of $S_1$, then $f$ is $\rho_1$- harmonic if
and only if $g\circ f$ is $\rho_1$- harmonic.
\end{lemma}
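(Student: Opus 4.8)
The plan is to prove the lemma through the Hopf differential characterization recorded just above the statement: by \eqref{anal}, a $C^2$ map is $\rho$-harmonic precisely when its Hopf differential $\Psi = \rho^2\circ f\, f_z\overline{f_{\bar z}}$ is a holomorphic quadratic differential on the domain. It therefore suffices to show that post-composing $f$ with the isometry $g$ leaves the Hopf differential unchanged, i.e. $\Psi_{g\circ f}=\Psi_f$, for then holomorphicity of one is equivalent to holomorphicity of the other, which by \eqref{anal} is exactly the asserted equivalence of harmonicity.

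First I would use that an isometry $g$ between the Riemann surfaces $S_1$ and $S_2$ preserves angles, hence is conformal, and so is either holomorphic or antiholomorphic; I would treat the orientation-preserving case first. If $g$ is holomorphic, then from the isometry relation $\rho_1(\omega)|d\omega|^2=\rho_2(w)|dw|^2$, $w=g(\omega)$, together with $|dw|^2=|g'(\omega)|^2|d\omega|^2$, I obtain the pointwise density identity $\rho_1^2(\omega)=\rho_2^2(g(\omega))\,|g'(\omega)|^2$. Applying the chain rule to $F:=g\circ f$, and using $g_{\bar w}=0$, gives $F_z=g'(f)f_z$ and $F_{\bar z}=g'(f)f_{\bar z}$. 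Substituting into the definition of the Hopf differential and invoking the density identity,
\begin{equation*}
\Psi_F=\rho_2^2(g(f))\,F_z\overline{F_{\bar z}}=\rho_2^2(g(f))\,|g'(f)|^2\,f_z\overline{f_{\bar z}}=\rho_1^2(f)\,f_z\overline{f_{\bar z}}=\Psi_f,
\end{equation*}
so the two Hopf differentials coincide.

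For the orientation-reversing case $g$ is antiholomorphic, so $g_w=0$ while $g_{\bar w}\neq0$, and the chain rule now yields $F_z=g_{\bar w}(f)\,\overline{f_{\bar z}}$ and $F_{\bar z}=g_{\bar w}(f)\,\overline{f_z}$. Forming $F_z\overline{F_{\bar z}}$ I would check that the complex conjugations cancel in the product, and that the isometry relation reads $\rho_1^2(\omega)=\rho_2^2(g(\omega))\,|g_{\bar w}(\omega)|^2$ in this case, so that once again $\Psi_F=\rho_1^2(f)\,f_z\overline{f_{\bar z}}=\Psi_f$. In both cases $\Psi_{g\circ f}=\Psi_f$, hence $g\circ f$ is $\rho_2$-harmonic if and only if $f$ is $\rho_1$-harmonic; the final clause is the special case $S_2=S_1$, $\rho_2=\rho_1$. (As a conceptual cross-check one may note the energy viewpoint: an isometry preserves the energy $E[\,\cdot\,,\rho]$, hence preserves critical points, giving the same conclusion.)

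I expect the only genuine care to be needed in the antiholomorphic case, where one must track the conjugations through the Wirtinger chain rule and confirm that they cancel so that $\Psi_F$ equals $\Psi_f$ rather than its conjugate; the holomorphic case and the derivation of the density transformation law are routine substitutions.
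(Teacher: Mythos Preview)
Your proof is correct and follows exactly the route the paper indicates: the paper does not spell out a proof but merely says the lemma follows from the holomorphicity of the Hopf differential \eqref{anal} (citing \cite{km}), and you carry this out by showing $\Psi_{g\circ f}=\Psi_f$ via the chain rule and the isometry density relation. Your treatment of the antiholomorphic case is also fine; the conjugations indeed cancel as you anticipate.
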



The rest of this paper is organized as follows. In Section~\ref{2},
using Lemma~\ref{lema}, which deals with local character of geodesic
lines, we show that an annulus $A(r_1,1)$ of the Euclidean plane can
be mapped by means of harmonic mappings to a fixed annulus of a
Riemann surface only if $r_1$ is not so close to $1$,
(Theorem~\ref{MAINTH}). The annulus of the Riemann surface has a
special character, see the formulation of Theorem~\ref{MAINTH}, but
this can be apply to all annuli that are isometric to this annulus
(see Lemma~\ref{le}). These isometries are well-known in the case of
Riemann sphere and of Hyperbolic plane. In Section~\ref{3} this
problem is treated for Riemann harmonic mappings in details.
Section~\ref{4} deals with the hyperbolic harmonic mappings.
Moreover some examples of radial harmonic mappings are given and
these mappings have inspired some new J. C. C. Nitsche type
conjectures.

\section{The main results}\label{2}
In this section we will consider an annulus of a Riemann surface
$\Sigma$ whose boundary components are homotopic to a point $0\in
\Sigma$. Similarly, the case of annuli generated by an oriented
Jordan curve $\gamma$ that is not homotopic to a point can be
considered. For the definition of ring domains of this type see
\cite[p.~9--11]{ks}. In Theorem~\ref{MAINTH2}  a special case of a
ring domain associated with Jordan curve that is not homotopic to a
point of $\Sigma$ is considered.

We begin by this useful lemma.
\begin{lemma}\label{lema}
If the metric $\rho_\Sigma$ in a chart $D$ of a Riemann surface
$\Sigma$ is given by $\rho_\Sigma(z) = h(|z|^2)$, then the intrinsic
distance of $lz,z\in D$, $l<1$, with $[lz,z]\subset D$, is given by
\begin{equation}\label{metrika}d_\Sigma(lz,z) =
\int_{l|z|}^{|z|}{h(t^2)}dt.
\end{equation}

In particular, if $z\in D$ and if $[0,z]\in D$ then $[0,z]$ is a
geodesic in $D$ with respect to the metric $\rho_\Sigma$. A similar
formula holds for $l>1$
\end{lemma}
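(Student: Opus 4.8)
The plan is to prove \eqref{metrika} by showing that the radial segment $[lz,z]$ is length-minimizing among all paths joining $lz$ to $z$ inside $D$, and that the claimed integral is exactly its length. Since the metric $\rho_\Sigma(z)=h(|z|^2)$ depends only on $|z|$, polar coordinates are the natural tool. First I would take an arbitrary piecewise $C^1$ path $\gamma:[0,1]\to D$ with $\gamma(0)=lz$, $\gamma(1)=z$, and write it as $\gamma(t)=r(t)e^{i\phi(t)}$, where $r(t)=|\gamma(t)|$ and $\phi$ is a continuous branch of $\arg\gamma$ along $\gamma$. A direct computation gives $|\gamma'(t)|^2=r'(t)^2+r(t)^2\phi'(t)^2\ge r'(t)^2$, so that $|\gamma'(t)|\ge |r'(t)|$, with equality precisely when $\phi$ is constant, i.e.\ when $\gamma$ is radial.

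Next I would bound the length from below using the antiderivative $H(r)=\int_0^r h(s^2)\,ds$; since $h>0$, the function $H$ is strictly increasing with $H'(r)=h(r^2)$. Because $\rho_\Sigma(\gamma(t))=h(r(t)^2)$, the inequality above and the triangle inequality give
\begin{align*}
\int_0^1\rho_\Sigma(\gamma(t))|\gamma'(t)|\,dt
&\ge \int_0^1 h(r(t)^2)|r'(t)|\,dt\\
&\ge \left|\int_0^1 \frac{d}{dt}H(r(t))\,dt\right|=H(|z|)-H(l|z|)=\int_{l|z|}^{|z|}h(s^2)\,ds,
\end{align*}
where the last equality uses $l<1$, so that $l|z|<|z|$ and $H(|z|)\ge H(l|z|)$. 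On the other hand, parametrizing the radial segment by $\gamma(s)=se^{i\theta}$, $s\in[l|z|,|z|]$, $\theta=\arg z$, its length is exactly $\int_{l|z|}^{|z|}h(s^2)\,ds$. Hence the radial segment realizes the infimum, which proves \eqref{metrika} and shows that $[lz,z]$ is a length-minimizing geodesic.

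The remaining assertions follow quickly. Letting $l\to 0$ yields that $[0,z]$ is a geodesic whenever $[0,z]\subset D$, and for $l>1$ the same argument applies after interchanging the endpoints, giving $d_\Sigma(lz,z)=\int_{|z|}^{l|z|}h(s^2)\,ds$. I expect the main point requiring care to be the \emph{global} nature of the intrinsic distance $d_\Sigma$: a competing path could a priori leave the chart $D$ in which the formula $\rho_\Sigma=h(|z|^2)$ is available. I would handle this by observing that the lower bound above applies verbatim to every path that remains in $D$, so the radial segment is the shortest curve within $D$; the hypothesis $[lz,z]\subset D$ together with the strict positivity of $h$ then guarantees that this local minimizer is genuinely a geodesic of $\Sigma$, and hence that no excursion outside $D$ can shorten it.
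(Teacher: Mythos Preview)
Your argument is correct and takes a genuinely different route from the paper's proof. The paper proceeds by computing the Christoffel symbols of the metric $g_{11}=g_{22}=h^2(|z|^2)$, writing down the geodesic ODE, and verifying that the half-line $y=0$ satisfies it; it then invokes rotational invariance to carry this over to any ray. In other words, the paper shows that radial segments are \emph{geodesics} (critical points of length) and reads off the arclength from the ODE solution.

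By contrast, you bypass Christoffel symbols entirely and give a direct comparison argument in polar coordinates: the pointwise inequality $|\gamma'|\ge |r'|$ together with the antiderivative $H$ yields the sharp lower bound for the length of \emph{any} competing curve in $D$, with equality on the radial segment. This is more elementary and in fact proves slightly more than the paper does, since it establishes length-\emph{minimization} among all paths in $D$ rather than merely the geodesic equation. The paper's approach has the advantage of giving the full geodesic system \eqref{11}--\eqref{22} explicitly, which might be useful elsewhere, but for the lemma as stated your argument is shorter and sharper.

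One shared caveat: both your proof and the paper's tacitly identify $d_\Sigma$ with the infimum over curves staying in the chart $D$. Your final paragraph tries to close this gap, but the sentence ``this local minimizer is genuinely a geodesic of $\Sigma$, and hence no excursion outside $D$ can shorten it'' is not quite a proof---being a geodesic does not by itself preclude shorter paths through $\Sigma\setminus D$. The paper does not address this point either, so for the purposes of the lemma as used later (where $D$ is the relevant annulus or disk and the formula is applied locally) your treatment is on the same footing.
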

\begin{proof}

To prove this we do as follows.

Since $g_{11} = g_{22} = h^2(|z|^2)$, and $g_{12}= g_{21}=0$, using
the formula
$$\Gamma^i {}_{k\ell}=\frac{1}{2}g^{im} \left(\frac{\partial
g_{mk}}{\partial x^\ell} + \frac{\partial g_{m\ell}}{\partial x^k} -
\frac{\partial g_{k\ell}}{\partial x^m} \right) = {1 \over 2} g^{im}
(g_{mk,\ell} + g_{m\ell,k} - g_{k\ell,m}), $$

where the matrix $(g^{jk})\ $ is an inverse of the matrix $(g_{jk}\
),$ we obtain that the Christoffel symbols of our metric are given
by:

\begin{equation}\label{g1}\Gamma_{11}^1 =\Gamma_{12}^2 = \Gamma_{21}^2
=\frac{h_x}{h},\end{equation}

\begin{equation}\label{g2} \ \Gamma_{22}^2 = \Gamma_{12}^1 =
\Gamma_{21}^1=\frac{h_y}{h},\end{equation}

\begin{equation}\label{g3}\Gamma_{11}^2 = -\frac{h_x}{h},\ \  \Gamma_{22}^1 =
-\frac{h_y}{h}.\end{equation}

The geodesic equations are given by:

    $$\frac{d^2x^\lambda }{ds^2} + \Gamma^{\lambda}_{~\mu \nu }\frac{dx^\mu }{ds}\frac{dx^\nu }{ds} = 0\
,\lambda = 1, 2.
    $$
In view of \eqref{g1}, \eqref{g2} and \eqref{g3} we obtain the
system:

\begin{equation}\label{11} \ddot x + 2\frac{xh'}{h}{\dot x}^2 + 4\frac{yh'}{h}\dot x \dot
y - 2\frac{xh'}{h}{\dot y}^2 = 0,\end{equation}

\begin{equation}\label{22} \ddot y - 2\frac{yh'}{h}{\dot x}^2 + 4\frac{xh'}{h}\dot x \dot
y + 2\frac{yh'}{h}{\dot y}^2=0.\end{equation}

Assume, first that $[l|z|, |z|]\subset D$. Denote the geodesic curve
joining the points $|z|$ and $|z|l$ by $c(s) := (x(s), y(s))$.

Putting $y = 0$ in \eqref{11} and \eqref{22} we obtain that $x$ is a
solution of the differential equality

$$\ddot x + 2\frac{xh'}{h}{\dot x}^2=0$$ and consequently

$$\dot x = \frac{C_1}{h(x^2)},$$ i.e.

\begin{equation}\label{sx1}s = C_1\int_{x_0}^{x}{h(t^2)}dt.\end{equation}

To determine $C_1$ and $x_0$, we use the conditions $x(0) = l|z|$,
and $x(s_0) = |z|$. Inserting these conditions to \eqref{sx1} we
obtain

\begin{equation}\label{sx}s = \int_{l|z|}^{x}{h(t^2)}dt,
\end{equation}
where $$s_0 =\int_{l|z|}^{|z|}{h(t^2)}dt.$$

 As the metric $h(|z|^2)|dz|$ is a rotation invariant,
according to \eqref{sx} it follows that $$d_\Sigma(lz,z) =
\inf_{lz,z\in
\gamma}\int_{\gamma}\rho_\Sigma(z)|dz|=\int_{l|z|}^{|z|}h(r^2)dr.$$

\end{proof}

Let $(\rho,\Theta)$ be geodesic polar coordinates about the point
$0$ of a chart $D$ of the Riemann surface $\Sigma$ with the metric
$\rho_\Sigma(z) = h(|z|^2).$ Let $g$ be the inverse of the function
$s\mapsto d_\Sigma(s,0)$. Then we have

$$\rho = \int_0^{g(\rho)} h(t^2)dt.$$

Thus \begin{equation}\label{drita}1 = g'(\rho) \cdot
h(g^2(\rho)),\end{equation}

and \begin{equation}\label{kalaj}\frac{h'}{h} =
-\frac{g''}{2g{g'}^2}.\end{equation}

Therefore the metric of the surface can be expressed as
$$ds^2 = d\rho^2 + h(g^2(\rho))g^2(\rho) d\Theta^2.$$

If $w$, is a twice differentiable, then

$$w(z)= g(\rho(z))e^{i\Theta}.$$
Now we have
$$w_x = (g'\rho_x + i g\Theta_x
)e^{i\Theta},$$

$$w_y = (g'\rho_y + i g\Theta_y)e^{i\Theta},$$
and thus \begin{equation}\label{prima1}w_{xx} = (g''
\rho_x^2+g'\rho_{xx} +2ig'\rho_x\Theta_x +i g\Theta_{xx}
-g\Theta_x^2)e^{i\Theta},\end{equation}

\begin{equation}\label{seconda1}w_{yy} = (g'' \rho_y^2+g'\rho_{yy} +2ig'\rho_y\Theta_y +i
g\Theta_{yy} -g\Theta_y^2)e^{i\Theta},\end{equation}

and

\begin{equation}\label{terca}w_z w_{\bar z} = \frac{1}{4}\left(w_x^2 +
w_y^2\right).\end{equation}

Assume now that $w$ is harmonic.  By applying \eqref{prima1},
\eqref{seconda1}, \eqref{terca} and \eqref{el} it follows that

\[\begin{split}(g'' |\nabla\rho|^2&+g'\Delta\rho+2ig'\left<\nabla\rho,\nabla\Theta\right>
+ig\Delta \Theta -g|\nabla\Theta|^2)e^{i\Theta}
\\&+2\frac{h'(g(\rho)^2)g(\rho) e^{-i\Theta}}{h(g(\rho)^2)}\left({g'}^2|\nabla\rho|^2 +
2ig'\left<\nabla\rho,\nabla\Theta\right> -g^2|\nabla \Theta|^2)
\right)e^{2i\Theta} = 0.
\end{split}
\]
Thus
\[\begin{split}(g''
|\nabla\rho|^2&+g'\Delta\rho+2ig'\left<\nabla\rho,\nabla\Theta\right>
+ig\Delta \Theta -g|\nabla\Theta|^2)\\&
+2\frac{h'(g(\rho)^2)g(\rho)}{h(g(\rho)^2)}\left({g'}^2|\nabla\rho|^2
+ 2ig'\left<\nabla\rho,\nabla\Theta\right> -g^2|\nabla \Theta|^2)
\right)=0.
\end{split}
\]

Therefore

\begin{equation}\label{imagine}2g'\left<\nabla\rho,\nabla\Theta\right> +g\Delta \Theta
+4\frac{h'(g(\rho)^2)g(\rho)g'(\rho)}{h(g(\rho)^2)}\left<\nabla\rho,\nabla\Theta\right>=0\end{equation}

and \begin{equation}\label{reale}(g'' |\nabla\rho|^2+g'\Delta\rho
-g|\nabla\Theta|^2)
+2\frac{h'(g(\rho)^2)g}{h(g(\rho)^2)}\left({g'}^2|\nabla\rho|^2
 -g^2|\nabla \Theta|^2) \right) = 0.
\end{equation}
Combining \eqref{kalaj} and \eqref{imagine} it follows that
\begin{equation}\label{dk}g'\Delta\rho =g\left(1
+2\frac{h'(g^2(\rho))}{h(g^2(\rho))}
 g^2\right)|\nabla \Theta|^2.
\end{equation}
From \eqref{drita} we obtain \begin{equation}\label{mal}\Delta\rho =
{g(\rho)}\cdot{h(g^2(\rho))}\left(1
+2\frac{h'(g^2(\rho))}{h(g^2(\rho))}
 g^2(\rho)\right)|\nabla \Theta|^2.\end{equation}
Assume now that $g(\rho)\in [\varrho_0, \varrho_1]$.

If the Gauss curvature of the surface is negative, then according to
\eqref{incre},

\begin{equation}\label{negat}\Delta\rho \ge {\varrho_0}\cdot{h(\varrho_0^2)}\left(1
+2\frac{h'(\varrho_0^2)}{h(\varrho_0^2)} \varrho_0^2\right)|\nabla
\Theta|^2.\end{equation}

 If the Gauss curvature of the surface is positive, then according to
\eqref{decre}, we have

\begin{equation}\label{posit}\Delta\rho \ge
{\varrho_0}\cdot{h(\varrho_1^2)}\left(1
+2\frac{h'(\varrho_1^2)}{h(\varrho_1^2)}
 \varrho_1^2\right)|\nabla \Theta|^2.\end{equation}

We make use of following well-known proposition.
\begin{proposition} Let
$u=\rho e^{i\Theta}$ be a $C^1$ surjection between the rings
$A(r_1,r_2)$ and $A(s_1,s_2)$ of the complex plane. Then
\begin{equation}\label{zero}
\int_{r_1 \leq|z|\leq r_2 }|\nabla \Theta
|^2\,\mathrm{d}x\,\mathrm{d}y\geq 2\pi\log\frac{r_2}{r_1}.
\end{equation}
\end{proposition}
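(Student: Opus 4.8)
The plan is to prove the inequality \eqref{zero} by expressing the Dirichlet-type integral of $\Theta$ in polar coordinates and applying the Cauchy--Schwarz inequality together with the fact that $u$ is a surjection onto the annulus $A(s_1,s_2)$. Writing $z = re^{i\varphi}$ with $r_1 \le r \le r_2$, I would first record that in polar coordinates
\[
|\nabla \Theta|^2 = \Theta_r^2 + \frac{1}{r^2}\Theta_\varphi^2,
\]
so that
\[
\int_{r_1\le |z|\le r_2}|\nabla\Theta|^2\,\mathrm{d}x\,\mathrm{d}y
= \int_{r_1}^{r_2}\!\!\int_0^{2\pi}\left(\Theta_r^2 + \frac{1}{r^2}\Theta_\varphi^2\right) r\,\mathrm{d}\varphi\,\mathrm{d}r.
\]
Since the first term is nonnegative, it suffices to bound $\int\int \frac{1}{r}\Theta_\varphi^2\,\mathrm{d}\varphi\,\mathrm{d}r$ from below. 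The key topological input is that, because $u$ maps the ring $A(r_1,r_2)$ surjectively onto the ring $A(s_1,s_2)$, for each fixed radius $r$ the argument $\Theta(r,\cdot)$ must wind at least once around the target annulus as $\varphi$ traverses $[0,2\pi]$; this forces $\int_0^{2\pi}\Theta_\varphi(r,\varphi)\,\mathrm{d}\varphi = 2\pi$ (or at least has absolute value $\ge 2\pi$, after orienting so the degree is positive).

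The main computation is then a one-dimensional estimate for each fixed $r$. By the Cauchy--Schwarz inequality,
\[
(2\pi)^2 = \left(\int_0^{2\pi}\Theta_\varphi\,\mathrm{d}\varphi\right)^2 \le 2\pi \int_0^{2\pi}\Theta_\varphi^2\,\mathrm{d}\varphi,
\]
so that $\int_0^{2\pi}\Theta_\varphi^2\,\mathrm{d}\varphi \ge 2\pi$. Multiplying by $1/r$ and integrating over $r \in [r_1,r_2]$ yields
\[
\int_{r_1}^{r_2}\frac{1}{r}\int_0^{2\pi}\Theta_\varphi^2\,\mathrm{d}\varphi\,\mathrm{d}r \ge 2\pi\int_{r_1}^{r_2}\frac{\mathrm{d}r}{r} = 2\pi\log\frac{r_2}{r_1},
\]
which, combined with the discarded nonnegative $\Theta_r^2$ term, gives exactly the claimed bound \eqref{zero}.

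The step I expect to be the main obstacle is making the winding-number input rigorous under only the $C^1$ surjectivity hypothesis. The heuristic ``$\Theta$ winds once'' requires care: one should fix a continuous (rather than merely local) branch of the argument $\Theta$ along each circle $|z| = r$, observe that $\Theta(r,2\pi) - \Theta(r,0)$ equals $2\pi$ times the topological degree of the boundary map, and confirm that surjectivity onto the full annulus forces this degree to be nonzero (and hence, with a fixed orientation, at least $1$ in absolute value). A subtlety is that $u$ need not be a homeomorphism, so I would argue via the degree of the map $\varphi \mapsto \arg u(re^{i\varphi})$ and use that the image circle must encircle the hole of $A(s_1,s_2)$. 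Once this degree statement is secured, the remaining analysis is the routine Cauchy--Schwarz argument above; I would also note that the same estimate holds verbatim whether the literal integration is over the Euclidean ring or, as it is applied later via \eqref{negat} and \eqref{posit}, to control $\Delta\rho$ through $|\nabla\Theta|^2$.
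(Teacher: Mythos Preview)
The paper does not give its own proof of this proposition; it simply refers to \cite{israel}. Your argument---pass to polar coordinates, discard the nonnegative $\Theta_r^2$ term, use that the winding number of $\varphi\mapsto u(re^{i\varphi})$ about the origin is $\pm 1$ to obtain $\bigl|\int_0^{2\pi}\Theta_\varphi\,d\varphi\bigr|=2\pi$, and then apply Cauchy--Schwarz on each circle---is exactly the standard proof found there. Your flagged caveat is apt: the degree step is immediate when $u$ is a homeomorphism (which is the only way the proposition is used in the paper), whereas under the bare hypothesis ``$C^1$ surjection'' one should add a sentence justifying nonzero degree.
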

For its proof see for example \cite{israel}.
\begin{theorem}[The main theorem]\label{MAINTH}
Let $u$ be a $\rho_\Sigma$ harmonic diffeomorphism between the
Euclidean annular regions $A(r_1,1)$ and annulus $\{ z\in \Bbb C:
\rho_0< d_\Sigma(z,0)<\rho_1\},$ of a Riemann surface $\Sigma$ and
let $\varrho_i = g(\rho_i)$, $i = 0,1$. Then
\begin{equation}\label{desired}
\frac{\rho_1}{\rho_0}\geq 1+\frac{\varrho_0}{\rho_0}\log^2
{r_1}\left\{
                             \begin{array}{ll}
                               {h(\varrho_0^2)}\left(1/2
+\frac{h'(\varrho_0^2)}{h(\varrho_0^2)}
 \varrho_0^2\right), & \hbox{if $K$ is negative;} \\
                               {h(\varrho_1^2)}\left(1/2
+\frac{h'(\varrho_1^2)}{h(\varrho_1^2)}
 \varrho_1^2\right), & \hbox{if $K$ is positive.}
                             \end{array}
                           \right.
\end{equation}
\end{theorem}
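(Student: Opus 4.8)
The plan is to work throughout with the radial geodesic coordinate $\rho(z)=d_\Sigma(u(z),0)$ of the image, writing $u=\rho e^{i\Theta}$, and to drive everything from the differential inequality for $\Delta\rho$ already established in \eqref{negat} (negative curvature) and \eqref{posit} (positive curvature). I would record these uniformly as $\Delta\rho\ge 2\varrho_0 C\,|\nabla\Theta|^2$ on $A(r_1,1)$, where $2C=h(\varrho_0^2)\bigl(1+2\tfrac{h'(\varrho_0^2)}{h(\varrho_0^2)}\varrho_0^2\bigr)$ in the negative case and the analogous quantity at $\varrho_1$ in the positive case, so that $C$ is precisely the bracketed constant in \eqref{desired}. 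Since $u$ is a diffeomorphism onto the prescribed annulus, the boundary circles map to the level curves $d_\Sigma(\cdot,0)=\rho_0$ and $d_\Sigma(\cdot,0)=\rho_1$; thus $\rho_0\le\rho\le\rho_1$ on the closed annulus, and I may assume $\rho\equiv\rho_0$ on $\{|z|=r_1\}$ and $\rho\equiv\rho_1$ on $\{|z|=1\}$ (the opposite orientation is handled symmetrically).

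The engine is the radial flux $F(r)=\oint_{|z|=r}\partial_r\rho\,ds=2\pi r\,m'(r)$, where $m(r)=\tfrac{1}{2\pi}\int_0^{2\pi}\rho(re^{i\theta})\,d\theta$ satisfies $m(r_1)=\rho_0$ and $m(1)=\rho_1$. First I would apply the divergence theorem on each subannulus $A(r_1,r)$ to get $F(r)-F(r_1)=\int_{A(r_1,r)}\Delta\rho$, then substitute the pointwise bound together with Proposition~\eqref{zero} applied on $A(r_1,r)$ to obtain $F(r)-F(r_1)\ge 2\varrho_0 C\cdot 2\pi\log\tfrac{r}{r_1}$. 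Dividing by $2\pi r$ converts this into a lower bound for $m'$:
$$m'(r)\ge\frac{F(r_1)}{2\pi r}+\frac{2\varrho_0 C}{r}\log\frac{r}{r_1}.$$

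Integrating in $r$ from $r_1$ to $1$ is the step that manufactures the square of the logarithm, since $\int_{r_1}^{1}r^{-1}\log\tfrac{r}{r_1}\,dr=\tfrac12\log^2 r_1$, giving
$$\rho_1-\rho_0=\int_{r_1}^{1}m'(r)\,dr\ge\frac{F(r_1)}{2\pi}\log\frac{1}{r_1}+\varrho_0 C\log^2 r_1.$$
The final ingredient is the sign of the inner flux: because $\rho$ attains its minimum value $\rho_0$ along the whole circle $\{|z|=r_1\}$ while $\rho\ge\rho_0$ inside, the one-sided radial derivative $\partial_r\rho$ is nonnegative there, so $F(r_1)\ge0$; as $\log\tfrac{1}{r_1}>0$, the first term drops and $\rho_1-\rho_0\ge\varrho_0 C\log^2 r_1$. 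Dividing by $\rho_0$ yields \eqref{desired}.

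I expect the main obstacle to be two linked points. The delicate one is justifying $F(r_1)\ge0$ rigorously: it rests on $\rho$ being minimized on the inner boundary, which uses that $u$ is a genuine diffeomorphism onto the prescribed annulus (so that the boundary correspondence and the bound $\rho\ge\rho_0$ persist up to $\{|z|=r_1\}$), and it requires enough boundary regularity of $\rho$ for the flux integral and the divergence theorem to be valid on the subannuli. The second point is the positivity of $C$: the whole chain needs $C\ge0$ so that replacing $\int|\nabla\Theta|^2$ by its lower bound \eqref{zero} inside $\int\Delta\rho\ge 2\varrho_0 C\int|\nabla\Theta|^2$ preserves the inequality. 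For negative curvature this is automatic, since $h$ is increasing forces the bracket nonnegative via \eqref{incre0}; for positive curvature it is exactly the place where the geometric restriction on $A_\Sigma$ must be invoked.
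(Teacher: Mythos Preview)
Your strategy---lower bound $\Delta\rho$ pointwise by $2\varrho_0 C\,|\nabla\Theta|^2$ via \eqref{negat}/\eqref{posit}, integrate the flux identity over $A(r_1,r)$, invoke \eqref{zero}, then integrate once more in $r$ to produce the $\log^2 r_1$---is exactly the paper's route. The one substantive difference is how the inner boundary term is disposed of, and this is precisely the obstacle you flagged.

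You try to kill the inner flux by arguing $F(r_1)\ge 0$ from the minimum of $\rho$ on $\{|z|=r_1\}$. The difficulty is that $u$ is only assumed to be a diffeomorphism of the \emph{open} annuli, so $\rho$ need not extend regularly to $\{|z|=r_1\}$, the divergence theorem on $A(r_1,r)$ is not directly available, and $F(r_1)$ may not even be defined. The paper sidesteps this entirely by an approximation: choose smooth nondecreasing $\varphi_n:(\rho_0,\rho_1)\to(\rho_0,\rho_1)$ that are \emph{constant} near $\rho_0$ with $\varphi_n'\to 1$, $\varphi_n''\to 0$, and set $\rho_n=\varphi_n\circ\rho$. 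Then $\rho_n$ is literally constant near the inner circle, so applying Green's formula on the compactly contained annulus $\{r_1+1/n\le|z|\le r\}$ gives
\[
\int_{|z|=r}\partial_r\rho_n\,ds=\int_{r_1+1/n\le|z|\le r}\Delta\rho_n\,d\mu,
\]
with the inner boundary contribution identically zero---no sign argument or boundary regularity needed. Since $\Delta\rho_n=\varphi_n''(\rho)|\nabla\rho|^2+\varphi_n'(\rho)\Delta\rho\ge 0$ by \eqref{mal} and \eqref{posdif}, Fatou's lemma lets you pass to the limit and obtain $\int_{|z|=r}\partial_r\rho\,ds\ge\int_{A(r_1,r)}\Delta\rho\,d\mu$. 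From there your computation goes through verbatim.

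So your outline is correct modulo this one device; the truncation $\varphi_n$ is the missing ingredient that converts your heuristic $F(r_1)\ge 0$ into a rigorous step without any boundary assumptions on $u$.
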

\begin{remark}
The Euclidean annulus in the domain is taken because of simplicity.
But harmonicity does not depend on the metric of the domain, which
means that we can take any other annulus which is conformally
equivalent to this Euclidean annulus. Therefore, the statement of
the theorem holds there as well, taking instead of the annulus
$A(r_1,1)$ an arbitrary annulus $A$ of a Riemann surface
$(\Sigma,\sigma_0)$ with the modulus $\mathrm{Mod}\,(A) =
\frac{1}{2\pi}\log\frac{1}{r_1}$. Notice also that, the theorem does
hold as well assuming that $u$ is proper, which means that
$\lim_{|z|\to 1} d_\Sigma(u(z),0) = \rho_1$ and $\lim_{|z|\to r_1}
d_\Sigma(u(z),0) = \rho_0$. If $K$ is negative, then
$h'(\varrho_0^2)$ is positive and equation~\eqref{desired} makes
sense for all $\rho_0$ and $\rho_1$. If $K$ is positive, then $1/2
+\frac{h'(\varrho_1^2)}{h(\varrho_1^2)}\varrho_1^2$ must be
positive, if we want to have a non-trivial inequality. \end{remark}
\begin{proof}
Let $\varphi_n :(\rho_0,\rho_1) \mapsto (\rho_0,\rho_1) $ be a
sequence of non decreasing functions, constant in some small
neighborhood of $\rho_0$, and satisfying the following conditions

\begin{equation}\label{posdif}
0\leq\varphi'_n(s)\to 1 \ \text{and} \ 0\leq\varphi''_n(s)\to 0 \
\text{as} \ n\to \infty
\end{equation}
$ \text{for every} \ s \in (\rho_0,\rho_1).$ (See \cite{israel}) for
an example of such sequence). Assume that $u$ is a corresponding
diffeomorphism. Let $\rho=|u|$ and let $\rho_n$ be the function
defined on $\{z:r_1<|z|< 1 \}$ by $\rho_n(z)=\varphi_{n}(\rho(z))$.

Then $$\Delta \rho_n(z)=\varphi_n''(\rho(z))|\nabla
\rho(z)|^2+\varphi_n'(\rho(z))\Delta \rho(z).$$ By (\ref{posdif}) it
follows at once that $$\Delta \rho_n(z)\to \Delta \rho(z) \
\text{as} \ n\to \infty$$ for every $z\in A(r_1,1)$. Similarly we
obtain $$\frac{\partial \rho_n}{\partial r}(z)\to \frac{\partial
\rho}{\partial r}(z) \ \text{as} \ n\to \infty$$ uniformly on $\{z:
|z|=r\}$ for every $r\in (r_1,1)$.
 By applying Green's formula for $\rho_n$ on
$\{z:r_1+1/n \leq|z|\leq r \}$, we obtain
\begin{equation*}
\int_{|z|=r}\frac{\partial \rho_n}{\partial r}\,ds-
\int_{|z|=r_1+1/n}\frac{\partial \rho_n}{\partial r}\,ds
=\int_{r_1+1/n\leq|z|\leq r }\Delta \rho_n \, d\mu.
\end{equation*}
Since the function $\rho_n$ is constant in some neighborhood of the
circle $|z|=r_1+1/n$, it follows that
$$\int_{|z|=r}\frac{\partial \rho_n}{\partial r}\,ds=
\int_{r_1+1/n \leq|z|\leq r }\Delta \rho_n \, d\mu. $$ Because of
(\ref{mal}) and (\ref{posdif}) it follows that the function $\Delta
\rho_n$ is positive for every $n$. Hence, by applying Fatou's lemma,
letting $n \to \infty$, we obtain
$$\int_{|z|=r}\frac{\partial \rho}{\partial
r}\,ds\geq \int_{r_1 \leq|z|\leq r }\Delta \rho \, d\mu.
$$  Assume now that $K\le 0$. Next, by applying (\ref{negat}) and
(\ref{zero}), we obtain

\begin{equation*}
\begin{split}
\int_{|z|=r}\frac{\partial \rho}{\partial r}\,ds &\geq
\int_{r_1\leq|z|\leq r }\Delta \rho \, d\mu\\&=\int_{r_1 \leq
|z|\leq r } {g(\rho)}\cdot{h(g^2(\rho))}\left(1
+2\frac{h'(g^2(\rho))}{h(g^2(\rho))}
 g^2(\rho)\right)|\nabla \Theta|^2 d\mu \\
&\geq {g(\rho_0)}\cdot{h(g^2(\rho_0))}\left(1
+2\frac{h'(g^2(\rho_0))}{h(g^2(\rho_0))}
 g^2(\rho_0)\right)\int_{r_1\leq|z|\leq r }|\nabla \Theta|^2d\mu\\&\geq 2\pi
{g(\rho_0)}\cdot{h(g^2(\rho_0))}\left(1
+2\frac{h'(g^2(\rho_0))}{h(g^2(\rho_0))}
 g^2(\rho_0)\right)\log\frac{r}{r_1}.
\end{split}
\end{equation*}
 It follows that
\begin{equation*}
r\frac{\partial }{\partial r} \int_{|\zeta|=1}\rho \,ds(\zeta)\geq
2\pi {g(\rho_0)}\cdot{h(g^2(\rho_0))}\left(1
+2\frac{h'(g^2(\rho_0))}{h(g^2(\rho_0))}
 g^2(\rho_0)\right)\log\frac{r}{r_1}.
\end{equation*} Dividing by
$r$ and integrating over $[r_1,1]$ by $r$ the previous inequality,
we get \[
\begin{split}
\int_{|\zeta|=1}\rho(\zeta)\,ds(\zeta)&-\int_{|\zeta|=1}
\rho(r_1\zeta)\,ds(\zeta)\\&\geq  \pi
{g(\rho_0)}\cdot{h(g^2(\rho_0))}\left(1
+2\frac{h'(g^2(\rho_0))}{h(g^2(\rho_0))}
 g^2(\rho_0)\right)\log^2 {r_1}
\end{split}
\]

i.e.
\begin{equation}\label{similar}
\begin{split}
2\pi(\rho_1 - \rho_0)\geq \pi
{g(\rho_0)}\cdot{h(g^2(\rho_0))}\left(1
+2\frac{h'(g^2(\rho_0))}{h(g^2(\rho_0))}
 g^2(\rho_0)\right)\log^2 {r_1}.
\end{split}
\end{equation}
Thus \eqref{desired} follows for this case. Similarly using
\eqref{posit} the case $K\ge 0$ can be established .
\end{proof}

\section{Riemann case}\label{3}

Recall that the Riemann metric with the curvature $1$ in the sphere
$S^2$ is given by:

$$ds^2=h^2(|z|^2) = \dfrac{4|dz|^2}{(1+|z|^2)^2}.$$
It induces the following intrinsic distance function:
\begin{equation}\label{intrim}d_R(z,w)= 2\arctan \left|\frac{z-w}{1+z\bar w}\right|.\end{equation}

The chordal distance is similar and is induced by stereograph
projection:
$$d(z,w)=\frac{2|z-w|}{\sqrt{1+|z|^2}\sqrt{1+|w|^2}}.$$

In both cases the isometries are
$$f(z) = e^{i\varphi}\frac{z-a}{1 + \bar a z},\,\, a\in \Bbb C, \varphi\in
[0,2\pi).$$ They form a subgroup of the group
$\mathrm{PGL}_2(\mathbb{C})$ of all M\"obius transformations. It is
denoted by $\mathrm{PSU}_2$. This subgroup is isomorphic to the
rotation group $\mathrm{SO}(3)$, which is the isometry group of the
unit sphere in $\mathbb{R}^3$.

Assume now that $u$ is harmonic in this setting.  Equation
\eqref{el} becomes
\begin{equation}\label{rimequ}u_{z\bar
z}-\frac{2\bar u}{1+|u|^2}u_z\cdot u_{\bar z}=0.\end{equation}

Notice this important example. The Gauss map of a surface $\Sigma$
in $\Bbb R^3$ sends a point on the surface to the corresponding unit
normal vector $\mathbf{n}\in \overline{\Bbb C} \cong S^2$. In terms
of a conformal coordinate $z$ on the surface, if the surface has
{\it constant mean curvature}, its Gauss map $\mathbf{n}: \Sigma
\mapsto \overline{\Bbb C}$, is a Riemann harmonic map \cite{rv} (see
also \cite{lst} and \cite{const} for a related topic).

If now we consider the geodesic polar coordinates $(\rho,\Theta)$
about the point $u(0)$ of the Riemann sphere $S^2$, then we have
 $$ds^2=d\rho^2+\sin^2 \rho d\Theta^2.$$

Let $f\in \mathrm{PSU}_2$ and take $w = f\circ u$.

Then, since $\mathrm{PSU}_2$ is the group of isometries, according
to Lemma~\ref{le} it follows that  $w$ is harmonic with respect to
the Riemann metric.

Thus, if $u$ is a harmonic mapping, then
$$w(z)=\frac{u(z)-u(0)}{1+u(z)\overline{u(0)}} = g(\rho) e^{i\Theta}$$
is harmonic as well.  Using \eqref{intrim} it follows that the inverse of the mapping $\varrho\mapsto d_R(\varrho,0)$ is given by
$$g(\rho) = \tan \frac{\rho}{2}.$$

According to \eqref{mal} we obtain

\begin{equation}\label{rimmod}\Delta \rho=\frac{\sin 2\rho}{2}|\nabla
\Theta|^2.\end{equation}

It follows that $\Delta \rho\ge 0$ if $\rho\in[0,\pi/2]$.
This means that $\rho$ is a subharmonic in the lower hemisphere. Moreover, from \eqref{rimmod} it follows that,

\begin{equation}\label{rimmod1}\Delta \rho\ge \rho_0\frac{\sin 2\rho_1}{2\rho_1}|\nabla\Theta|^2,
\text{whenever $\rho\in [\rho_0,\rho_1]\subset [0,\pi/2]$}.\end{equation}

The annulus $A(\tau,\sigma) = \{z: \tau \le |z|\le \sigma\}$ is
conformally equivalent (and isometric with respect to Riemann
metric) to the annulus
$$\{ z\in \Bbb C: \rho_0\le d_R(z,w)\le \rho_1\},$$
where $$d_R(z,w)= 2\arctan \left|\frac{z-w}{1+z\bar w}\right|,$$
and $$\rho_0 = 2\arctan \tau,\ \ \rho_1 = 2\arctan \sigma.$$

On the other hand if $A_0$ is an arbitrary doubly connected domain
in $\Bbb C$ then it is conformally equivalent to an annulus
$A(r_1,1)$. If $w$ is harmonic, $a$ is analytic, and $k$ is an
isometry of $S^2$, i.e. if $k\in \mathrm{PSU}_2$, then $k\circ
w\circ a$ is harmonic.

Having the previous fact in mind and following the same lines of the
proof of Theorem~\ref{MAINTH}, and using \eqref{rimmod1} instead of
\eqref{posit} the following theorem for Riemann harmonic mappings
can be proved.

\begin{theorem}\label{MAINTH1}
a) Let  there be a Riemann harmonic diffeomorphism between the
annular regions with the modulus $\frac{1}{2\pi}\log \frac{1}{r_1}$
and $A_R(\rho_0,\rho_1,a):=\{ w\in \overline{\Bbb C}: \rho_0<
d_R(w,a)< \rho_1\}$, with $\rho_1<\pi/2$. Then

\begin{equation} \label{desired12}\frac{\rho_1}{\rho_0}\ge 1+  \frac{\sin 2\rho_1}{4\rho_1}\log^2
{r_1}.\end{equation}

b) In particular for $a = 0$, we obtain that if there is a harmonic
mapping between annular regions $A(r_1,1)$ and $A(\tau,\sigma)$
($\sigma<1$), of $\Bbb R^2$, then
\begin{equation}\label{desired11}
\frac{\arctan \sigma}{\arctan \tau} \geq 1+  \frac{\sigma(1-\sigma^2)}{2(1+\sigma^2)^2\arctan \sigma}\log^2  {r_1}.
\end{equation}

\end{theorem}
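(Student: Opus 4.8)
The plan is to run the proof of Theorem~\ref{MAINTH} essentially verbatim, replacing the positive--curvature lower bound \eqref{posit} by the sharper Riemann bound \eqref{rimmod1}, and then to specialize in part (b).

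First I would normalize the data. Since the domain is an annular region of modulus $\frac{1}{2\pi}\log\frac{1}{r_1}$, it is conformally equivalent to the standard Euclidean annulus $A(r_1,1)$ via some conformal map $\phi$; on the target side I compose with the isometry $k(w)=\frac{w-a}{1+\bar a w}\in\mathrm{PSU}_2$, which carries the geodesic center $a$ to $0$. By Lemma~\ref{le} together with the conformal invariance of harmonicity in the domain, $w:=k\circ u\circ\phi$ is again Riemann harmonic and maps $A(r_1,1)$ onto $\{w:\rho_0<d_R(w,0)<\rho_1\}$. In geodesic polar coordinates about $0$ we then write $w=g(\rho)e^{i\Theta}$ with $g(\rho)=\tan\frac{\rho}{2}$ and $\rho(z)=d_R(u(z),a)\in[\rho_0,\rho_1]$, so that the identity \eqref{rimmod} is available. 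Here the hypothesis $\rho_1<\pi/2$ keeps the whole image inside the bounded chart $|w|<1$ and confines $\rho$ to the interval $[0,\pi/2]$ where $\Delta\rho\ge 0$.

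Next I would reproduce the subharmonicity estimate. Mollifying $\rho$ by the sequence $\varphi_n$ of \eqref{posdif}, applying Green's formula on $\{r_1+1/n\le|z|\le r\}$ and then Fatou's lemma --- legitimate because $\Delta\rho\ge0$ on $[0,\pi/2]$ by \eqref{rimmod} --- yields, exactly as in Theorem~\ref{MAINTH},
\[
\int_{|z|=r}\frac{\partial\rho}{\partial r}\,ds\ \ge\ \int_{r_1\le|z|\le r}\Delta\rho\,d\mu .
\]
Into the right-hand side I insert \eqref{rimmod1} --- valid because $\frac{\sin 2\rho}{2\rho}$ is decreasing on $[0,\pi/2]$, whence $\frac{\sin 2\rho}{2}\ge\rho_0\frac{\sin 2\rho_1}{2\rho_1}$ for $\rho\in[\rho_0,\rho_1]$ --- and then the Proposition \eqref{zero}, obtaining
\[
\int_{|z|=r}\frac{\partial\rho}{\partial r}\,ds\ \ge\ 2\pi\,\rho_0\,\frac{\sin 2\rho_1}{2\rho_1}\,\log\frac{r}{r_1}.
\]
Rewriting the left-hand side as $r\,\frac{\partial}{\partial r}\int_{|\zeta|=1}\rho(r\zeta)\,ds(\zeta)$, dividing by $r$ and integrating over $r\in[r_1,1]$ with $\int_{r_1}^{1}r^{-1}\log\frac{r}{r_1}\,dr=\tfrac12\log^2 r_1$, and using the proper boundary behaviour $\rho\to\rho_1$ as $|z|\to1$ and $\rho\to\rho_0$ as $|z|\to r_1$ (so the two limiting boundary integrals are $2\pi\rho_1$ and $2\pi\rho_0$), I arrive at $2(\rho_1-\rho_0)\ge\rho_0\frac{\sin 2\rho_1}{2\rho_1}\log^2 r_1$, which is \eqref{desired12}. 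Part (b) is then pure substitution: with $a=0$, $\rho_0=2\arctan\tau$ and $\rho_1=2\arctan\sigma$, the double-angle computation $\sin(4\arctan\sigma)=\frac{4\sigma(1-\sigma^2)}{(1+\sigma^2)^2}$ converts $\frac{\sin 2\rho_1}{4\rho_1}$ into $\frac{\sigma(1-\sigma^2)}{2(1+\sigma^2)^2\arctan\sigma}$, giving \eqref{desired11}.

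The step I expect to demand the most care is the passage to the boundary: the identity \eqref{rimmod} holds only in the open annulus, yet the conclusion rests on the boundary integrals converging to $2\pi\rho_i$. This is precisely what the mollification argument, combined with the properness of the diffeomorphism (inner circle to inner boundary, outer to outer), secures in Theorem~\ref{MAINTH}, and it is the only place, beyond guaranteeing that the bound \eqref{rimmod1} is positive and nontrivial, where $\rho_1<\pi/2$ is genuinely used. Everything else reduces to the monotonicity of $\sin(2\rho)/(2\rho)$ and the elementary substitution recorded above.
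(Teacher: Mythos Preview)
Your proposal is correct and follows exactly the approach the paper indicates: the paper does not write out a separate proof but simply says to follow the lines of Theorem~\ref{MAINTH} using \eqref{rimmod1} in place of \eqref{posit}, together with the normalization via $\mathrm{PSU}_2$ and conformal equivalence discussed just before the statement. Your normalization, mollification/Green/Fatou argument, application of \eqref{zero}, integration in $r$, and the double-angle substitution for part (b) all match the paper's method step for step.
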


{\it Note that the condition $\rho_1<\pi/2$, (i.e. $\sigma<1$) means
that the annulus is contained in a hemisphere of $S^2$. The items a)
and b) are equivalent.}

By applying Theorem~\ref{MAINTH1} and having in mind the discussion
after equation~\eqref{rimequ} we have.
\begin{corollary}\label{cor}
Let $\Sigma$ be a surface with constant mean curvature in $\Bbb
R^3$. Let in addition $A_R(\rho_0,\rho_1,a)$ be an annulus in $S^2$
that lies in a hemisphere of $S^2$. Let $\mathbf{n}$ be the Gauss
map of $\Sigma$, that maps a ring domain $A\subset \Sigma$ properly
onto the annulus $A_R(\rho_0,\rho_1,a)$. Then

\begin{equation}\label{minsur}\mathrm{Mod} (A) \le\frac{1}{\pi}\sqrt{\frac{\rho_1-\rho_0}{\rho_0}\cdot
\frac{\rho_1}{\sin 2\rho_1}},\end{equation} where $\mathrm{Mod}(A)$
is the conformal modulus of $A$.
\end{corollary}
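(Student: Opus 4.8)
The plan is to reduce the statement to a direct application of Theorem~\ref{MAINTH1} followed by an elementary inversion of the resulting inequality. First I would record that the ring domain $A\subset\Sigma$, being doubly connected, is conformally equivalent to a Euclidean annulus $A(r_1,1)$, so that its conformal modulus is $\mathrm{Mod}(A)=\frac{1}{2\pi}\log\frac{1}{r_1}$. Fix a conformal equivalence $\phi\colon A(r_1,1)\to A$.

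Next I would invoke the geometric fact recalled after \eqref{rimequ}: since $\Sigma$ has constant mean curvature, its Gauss map $\mathbf{n}\colon\Sigma\to\overline{\mathbb C}\cong S^2$ is a Riemann harmonic map \cite{rv}. Because harmonicity is preserved under precomposition with analytic maps (as noted just before the statement of Theorem~\ref{MAINTH1}), the composition $\mathbf{n}\circ\phi\colon A(r_1,1)\to A_R(\rho_0,\rho_1,a)$ is again Riemann harmonic; moreover it is proper, since $\mathbf{n}$ maps $A$ properly onto $A_R(\rho_0,\rho_1,a)$ and $\phi$ is a homeomorphism. By the Remark following Theorem~\ref{MAINTH}, the conclusion holds for proper maps and not only for diffeomorphisms, so \eqref{desired12} applies and yields
\[
\frac{\rho_1}{\rho_0}\ge 1+\frac{\sin 2\rho_1}{4\rho_1}\log^2 r_1.
\]

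Finally I would solve this for $\log^2 r_1$. The hypothesis $\rho_1<\pi/2$ places the annulus in a hemisphere, so $0<2\rho_1<\pi$ and hence $\sin 2\rho_1>0$; the coefficient of $\log^2 r_1$ is therefore positive and the inequality rearranges to
\[
\log^2 r_1\le \frac{4\rho_1}{\sin 2\rho_1}\cdot\frac{\rho_1-\rho_0}{\rho_0}.
\]
Substituting $\log\frac{1}{r_1}=2\pi\,\mathrm{Mod}(A)$, that is $\log^2 r_1=4\pi^2\,\mathrm{Mod}(A)^2$, and taking square roots gives exactly \eqref{minsur}.

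I do not expect a serious obstacle: the mathematical content is entirely carried by Theorem~\ref{MAINTH1} together with the identification of the Gauss map of a CMC surface as a Riemann harmonic map. The only points requiring care are verifying that the properness of $\mathbf{n}$ on $A$ transfers to $\mathbf{n}\circ\phi$, so that the proper-map version of Theorem~\ref{MAINTH1} may be legitimately invoked, and checking the sign of $\sin 2\rho_1$ before dividing; both are guaranteed by the stated hypotheses, namely that $A$ is mapped properly and that the target annulus lies in a hemisphere.
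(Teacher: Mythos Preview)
Your proposal is correct and follows exactly the route the paper intends: the paper's own proof is the single sentence ``By applying Theorem~\ref{MAINTH1} and having in mind the discussion after equation~\eqref{rimequ} we have,'' and you have simply spelled out the conformal uniformization of $A$, the invocation of the Ruh--Vilms fact that the Gauss map of a CMC surface is Riemann harmonic, and the elementary algebra turning \eqref{desired12} into \eqref{minsur}. Your attention to properness and to the positivity of $\sin 2\rho_1$ is appropriate and matches the remarks the paper makes elsewhere.
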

Notice that, in the case of minimal surfaces, the Gauss map is
meromorphic. 
If the Gauss map is injective and meromorphic, then \eqref{minsur}
is equivalent with \eqref{desired11}, taking $r_1 =
\frac{\tau}{\sigma}$, and this inequality can be proved directly.
Notice also this interesting fact, the right hand side of inequality
\eqref{minsur} does not depend on the surface $\Sigma$. For this
topic see \cite{ros} and \cite{bi}.
\begin{example}
Let $\Sigma =\{(u,v,w): u^2 + v^2 = 1, w\in \Bbb R\}$ be a cylinder.
Then $\Sigma$ is a CMC surface. The conformal parametrization is
given by $f(x,y) = (\cos x, \sin x, y)$, and the Gauss map is given
by $\mathbf{n}(x,y) = (\cos x, \sin x)$. It is easy to see that
$\mathbf{n}$ satisfies \eqref{rimequ}. Moreover, the image of the
whole cylinder is the equator, and this means that the condition
"$A_R(\rho_0,\rho_1,a)$ is an annulus in $S^2$ that lies in a
hemisphere of $S^2$" in Corollary~\ref{cor} is important.
\end{example}

In order to find examples of radial Riemann harmonic maps, we will
put
$$w(z) = g(r) e^{i\varphi},$$ where $g$ is a increasing or a decreasing function to
be chosen. This will include all harmonic radial mappings.

Direct calculations yield

\begin{equation}\label{prima}w_{z\bar z} = \frac 14 \Delta w = \frac{1}{4r^2}\left(r^2
w_{rr} +r w_r + w_{\varphi \varphi}\right) ,\end{equation}

\begin{equation}\label{seconda}w_z w_{\bar z} = \frac{1}{4}\left(w_x^2 + w_y^2\right) =
\frac{1}{4r^2} (r^2w_r^2 - w_\varphi^2).\end{equation}

Inserting this to the harmonic equation, we obtain

$$r^2 g'' + rg' - g - \frac{2g}{1+ g^2} (r^{2}{g'}^2 - g^2) = 0.$$

Let $r = e^x$.

Setting $y(x) = g(e^x)$, we obtain

$$y''  - y = \frac{2y}{1 + y^2} \left({y'}^2 - y^2\right),$$
where $y$ is given by
\begin{equation}\label{indef}\int\frac{1}{\sqrt{y^2+c(1-y^2)^2}}dy = x +
c_1.\end{equation} Let $\sigma\le 1$. Then by
\begin{equation}\label{defin}r = \exp\left(\int_{\sigma}^y\frac{1}{\sqrt{z^2+c(1-z^2)^2}}dz\right),\end{equation}
the inverse of the mapping $g$is given, satisfying the condition
$r(\sigma)=1$.

The integrand in previous integral is real for $\tau\le y \le
\sigma$ if and only if \begin{equation}\label{condition}c\ge -
(1/\tau+\tau)^{-2}. \end{equation}

Denote by $h_{\tau, \sigma}(y)$ the function

$$r = h_{\tau, \sigma}(y) = \exp{\int_{\sigma}^y\frac{dx}{\sqrt{x^2 - (1/\tau+\tau)^{-2}(1+x^2)^2}}}.$$

It is extremal in the sense $$h_{\tau, \sigma}(\tau)\le
\exp\left(\int_{\sigma}^\tau\frac{1}{\sqrt{z^2+c(1+z^2)^2}}dz\right)$$
whenever condition \eqref{condition} is satisfied.

As $h_{\tau, \sigma}(y)$ is an increasing function for $\tau\le y
\le \sigma$, it follows that

$$w(z) = h_{\tau, \sigma}^{-1}(r)e^{i\varphi}$$ is a spherical harmonic mapping of the annulus
$A(h_{\tau, \sigma}(\tau),1)$ onto the annulus $A(\tau, \sigma)$.

{\bf Conjecture 1}. {\it If $\sigma<1$, and there exists a Riemann
harmonic mapping of the annulus $A(r,1)$ onto the annulus
$A(\tau,\sigma)$, then
\begin{equation}\label{conjecture1}r\ge h_{\tau,
\sigma}(\tau).\end{equation}} This is similar to the Euclidean plane
harmonic conjecture of J. C. C. Nitsche (see introduction of this
paper).

Although the mapping
$$w(z) = h_{\tau,1}^{-1}(r)e^{i\varphi}$$ is a Riemann harmonic
mapping of the annulus $A(h_{\tau, 1}(\tau),1)$ onto the annulus
$A(\tau, 1)$ the case $\sigma = 1$ is excluded.

Namely for $\sigma<1$ $$\lim_{\tau\to 1}h_{\tau, \sigma}(\tau) =
1,$$ but

$$\lim_{\tau\to 1}h_{\tau, 1}(\tau) = e^{-\pi/2},$$ and this
means that we can map, by means of Riemann harmonic diffeomorphisms,
the annulus with modulus
$$\frac 1{2\pi} \log \left(\frac{1}{e^{-\pi/2}}\right)=\frac 14 $$
onto the annulus with arbitrarily small modulus.

{\it This means that the condition $\sigma<1$ is essential in
Theorem~\ref{MAINTH1}.}

{\it The question arises, if is the constant $\frac 14$ the
 best (smallest) possible value of the modulus in this setting.}

Figure 1 is shows the function $\frac 1{2\pi} \log
\left(\frac{1}{h_{\tau, 1}(\tau)}\right).$

 \begin{figure}[htp]\label{poincare1}
\centering
\includegraphics{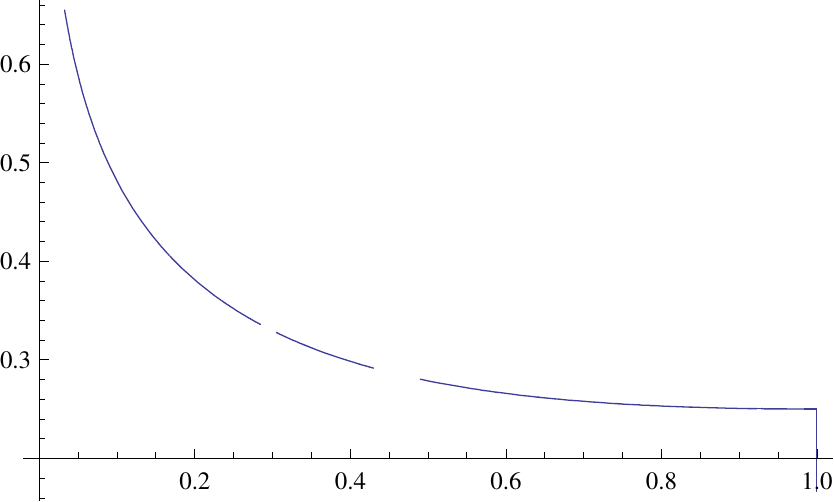}
\caption{The modulus of extremal domains. The infimum is $1/4$.}
\end{figure}

For $\sigma<1$,
$$\lim_{\tau\to \sigma-0}h_{\tau, \sigma}(\tau) = 1,$$ and this
verifies Theorem~\ref{MAINTH1} in some sense.

In \eqref{desired11} it is shown that

\begin{equation}\label{excon}r\ge h_0(\sigma,\tau):=\exp\left(-\frac{2(1+\sigma^2)^2\arctan \sigma(\arctan \sigma-\arctan \tau)}
{\sigma(1-\sigma^2)\arctan\tau }\right).\end{equation}

Of course $h_{\tau,\sigma}(\tau)\ge h_0(\sigma,\tau)$. Figures 2 and
3 illustrate that our inequality is almost sharp when $\sigma$ is
not too close to $1$.

\begin{figure}[htp]\label{poincare1}
\centering
\includegraphics{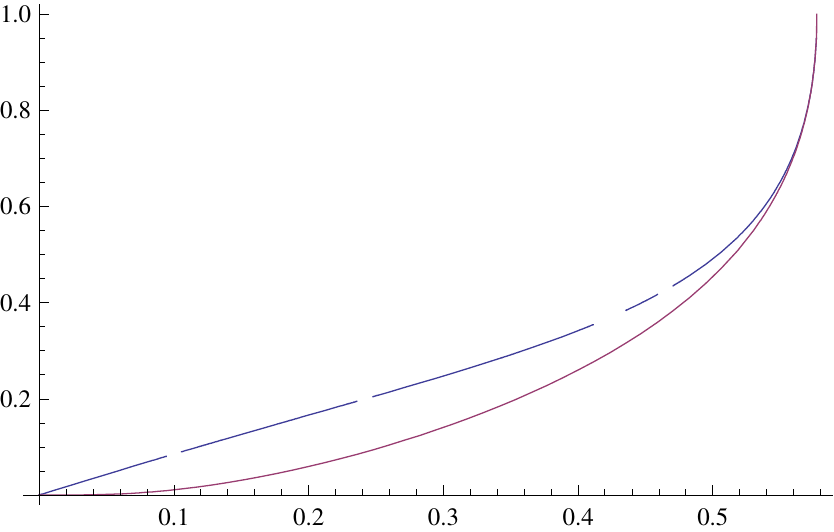}
\caption{The case $\sigma = \sqrt{3}/3$, above is the function
$h_{\tau,\sqrt{3}/3}(\tau)$ and below is the function
$h_0(\sqrt{3}/3,\tau)$.}
\end{figure}

\begin{figure}[htp]\label{poincare2}
\centering
\includegraphics{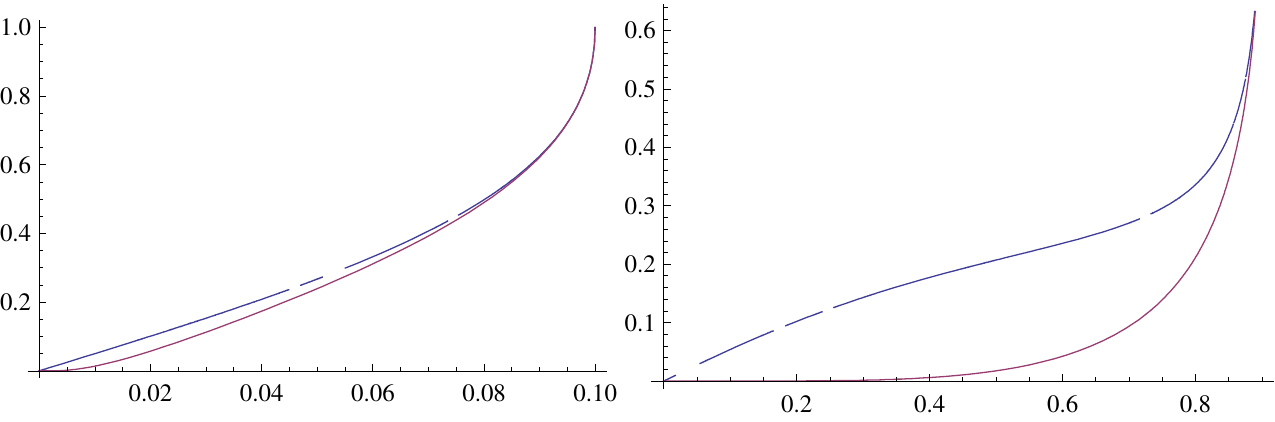}
\caption{The left graphic corresponds to the case $\sigma =1/10$, and the right one to the case $\sigma = 9/10$.}
\end{figure}

\section{Hyperbolic case}\label{4}

This case has been studied in \cite{h}, but for the halfplane model.
More precisely the following theorem has been proved in \cite{h}:
Let $A_{\rho_1,\rho_2}$ be a round annulus in the hyperbolic plane
$H$ centered at $u_0$, that is, $A_{\rho_1,\rho_2}=\{u\in
H:\rho_1\leq\rho(u,u_0)\leq\rho_2\}$. Let $u\colon B\subset \Bbb
C\to A_{\rho_1,\rho_2}$ be a harmonic diffeomorphism. Then the
conformal modulus ${\rm Mod}(B)\leq
C\sqrt{\rho_2-\rho_1}\exp(-\rho_1)$, with some absolute constant
$C$. According to Lemma~\ref{le} this corresponds to the
inequality~\eqref{deshira}.  We consider the unit disk directly, in
order to give an explicit inequality (see \eqref{desired30} below),
and in order to analyze the sharpness of the result. We will
consider a hyperbolic annulus as well. A similar argument can be
repeated for the punctured unit disk. It can be considered as a
special case of Theorem~\ref{MAINTH}, however additional
calculations are needed.

After that we will provide some examples of radial hyperbolic
harmonic mappings of the hyperbolic unit disk, which show that
inequality \eqref{desired30} is almost sharp. The author believes
that, these examples are well known, but it seems that they haven't
been considered for this setting.

If $u:\Bbb U\mapsto \Bbb U$ is a harmonic mapping with respect to
the hyperbolic metric $$\varrho ds^2=\dfrac{4|dz|^2}{(1-|z|^2)^2}$$
then Euler-Lagrange equation of $u$ is
\begin{equation}\label{eqpo}
u_{z\bar z}+\frac{2\bar u}{1-|u|^2}u_z\cdot u_{\bar z}=0.
\end{equation}

An important example of hyperbolic harmonic mapping is the Gauss map
of a space-like surfaces with constant mean curvature $H$ in the
Minkowski $3$-space $M^{2,1}$ (see \cite{ctr}, \cite{mt} and
\cite{wan}).

As in the Riemann case we consider the geodesic polar coordinates
$(\rho,\Theta)$ about the point $u_0=u(0)$ of the unit disc $\Bbb
U$. We have
$$\rho=\log\left(\frac{1+|\frac{u-u_0}{1-u\overline{u_0}}|}{1-|\frac{u-u_0}{1-u\overline{u_0}}|}\right)$$
and consequently $$\tanh
\frac{\rho}{2}=|\frac{u-u_0}{1-u\overline{u_0}}|.$$

Hence $$\frac{u-u_0}{1-u\overline{u_0}}=\tanh \frac \rho
2e^{i\Theta}.$$ Setting $$w=\frac{u-u_0}{1-u\overline{u_0}}$$ we
obtain that $$u=\frac{w+u_0}{1+w\overline{u_0}}.$$ Since the
mappings $\{w=e^{i\varphi}\frac{z-a}{1-z\overline a}, |a|<1\}$ are
isometries of the hyperbolic unit disk, according to Lemma~\ref{le}
the mapping $w$ is harmonic with respect to the hyperbolic metric.

In this
case $\varrho = g(\rho) = \tanh (\frac{\rho}{2}).$

According to \eqref{mal} and \eqref{imagine} it follows that
$$ \Delta \Theta +(2/\sinh \rho+2)\left<\nabla\rho,\nabla\Theta\right>=0,$$
and
$$g'\Delta\rho = 2\frac{g(\rho(z))}{1-g^2}g^2|\nabla \Theta|^2+g|\nabla\Theta|^2.$$

Thus $$\Delta \rho=\frac{\sinh 2\rho}{2}|\nabla \Theta|^2.$$

Theorem~\ref{MAINTH} yields that

\begin{equation}\label{desired3}
\frac{\rho_1}{\rho_0}\geq 1+\frac{\varrho_0}{\rho_0}\log^2
{r_1} {h(\varrho_0^2)}\left(1/2
+\frac{h'(\varrho_0^2)}{h(\varrho_0^2)}
 \varrho_0^2\right)= 1 + \frac{\varrho_0}{\rho_0}\frac{1+\varrho_0^2}{(1-\varrho_0^2)^2}\log^2 {r_1}.
\end{equation}

Therefore

\begin{equation}\label{deshira}
\frac{\rho_1}{\rho_0}\geq  1+\frac{\sinh 2\rho_0}{2\rho_0}\log^2
{r_1},
\end{equation}

or equivalently
\begin{equation}\label{desired30}
\log\left(\frac{1+\varrho_1}{1-\varrho_1}:\frac{1+\varrho_0}{1-\varrho_0}\right)\ge
{\varrho_0}\frac{1+\varrho_0^2}{(1-\varrho_0^2)^2}\log^2 {r_1}.
\end{equation}

Hence, if $w$ is a hyperbolic harmonic mapping between the annuli
$A(r_1,1)$ and $A(\varrho_0,\varrho_1)$, then inequality
\eqref{desired30} holds.

Now as in the Riemann case, if $A_0$ is any doubly connected domain
in $\Bbb C$ with the modulus $\frac{1}{2\pi}\log\frac{1}{r_1}$ and
$A_1$ is any annulus in the hyperbolic disk that is isometric to an
annulus $A(\varrho_0,\varrho_1)$, then \eqref{desired30} holds.

The following theorem establishes a corresponding inequality for
hyperbolic annulus $A(1/R,R)$. See Subsection~\ref{hypo}.

\begin{theorem}\label{MAINTH2}
Let  there be a $\lambda_R$ harmonic diffeomorphism between the
annular region $A(r_1,1)$ of the Euclidean plane and $\{ z\in \Bbb
C: 0< \rho_0< \int_{1}^{|z|}{h_R(t^2)}dt< \rho_1\},$ of the
hyperbolic annulus $A(1/R,R)$ and let $g$ be the inverse of the
function $\omega(r) =\int_{1}^{r}{h_R(t^2)}dt.$ Let in addition
$\varrho_i = g(\rho_i)$, $i = 0,1$. Then
\begin{equation}\label{desired77}
\rho_1-\rho_0\geq {\varrho_0}
                               \sec\frac{\pi \log\varrho_0}{
  {2 \log R}} \left(\log R -
   2 \varrho_0 \log R + \pi \varrho_0 \tan\frac{\pi \log\varrho_0}{
  {2 \log R}}\right)\frac{\pi\log^2 {r_1}}{4 \log^2 R}.
\end{equation}
\end{theorem}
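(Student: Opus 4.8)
The plan is to treat Theorem~\ref{MAINTH2} as the Fermi-coordinate counterpart of Theorem~\ref{MAINTH}, the only structural change being that the target ring now encircles the core geodesic $\{|z|=1\}$ of $A(1/R,R)$ instead of a point. Accordingly I would parametrize the target by the signed distance to the core, $\rho(w)=\omega(|w|)=\int_{1}^{|w|}h_R(t^2)\,dt$, together with $\Theta=\arg w$, and write the harmonic map as $w=g(\rho)e^{i\Theta}$ with $g=\omega^{-1}$. Since $\omega'(r)=h_R(r^2)$, this $g$ still satisfies the normalization $1=g'(\rho)\,h_R(g^2(\rho))$ of \eqref{drita}, and the companion identity \eqref{kalaj} holds unchanged. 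On the domain side the picture is exactly as in Theorem~\ref{MAINTH}: $A(r_1,1)$ with the properness conditions $\rho\to\rho_0$ as $|z|\to r_1$ and $\rho\to\rho_1$ as $|z|\to1$, so $\varrho_i=g(\rho_i)$ are the two bounding radii, both in $(1,R)$.

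The first thing to verify is that the local identity \eqref{mal} survives verbatim, i.e. that $\Delta\rho=g(\rho)\,h_R(g^2(\rho))\bigl(1+2\tfrac{h_R'(g^2(\rho))}{h_R(g^2(\rho))}\,g^2(\rho)\bigr)|\nabla\Theta|^2$. This is immediate: the derivation of \eqref{mal} used only the rotationally symmetric form $h(|w|^2)$ of the target metric, the ansatz $w=g(\rho)e^{i\Theta}$, and the two relations \eqref{drita} and \eqref{kalaj}; at no point did it use that $0$ belongs to the chart. The sole difference from Theorem~\ref{MAINTH} is hidden in the new anchor $|z|=1$ inside $g=\omega^{-1}$.

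Next I would specialize to $h=h_R$ and extract the coefficient of $|\nabla\Theta|^2$ at the inner radius. From the explicit density $h_R(t)=\tfrac{\pi/2}{\sqrt t\,\log R}\sec\!\bigl(\tfrac{\pi\log t}{4\log R}\bigr)$ one gets the clean angular factor $g\,h_R(g^2)=\tfrac{\pi}{2\log R}\sec\!\bigl(\tfrac{\pi\log g}{2\log R}\bigr)$ and the logarithmic derivative $\tfrac{4t\,h_R'(t)}{h_R(t)}=-2+\tfrac{\pi}{\log R}\tan\!\bigl(\tfrac{\pi\log t}{4\log R}\bigr)$. Feeding these into the bracket of \eqref{mal} at $g=\varrho_0$, and writing $v_0=\tfrac{\pi\log\varrho_0}{2\log R}$, yields after the $\sec$/$\tan$ simplification the coefficient $c(\varrho_0)$ appearing on the right-hand side of \eqref{desired77}; the analogous but simpler hyperbolic-disk computation leading to \eqref{desired3} is a useful template. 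To be allowed to bound the integrand by its value at $\varrho_0$ I must check that this coefficient is monotone increasing in $g$ on the actual range $[\varrho_0,\varrho_1]\subset(1,R)$. This is the delicate point: unlike in Theorem~\ref{MAINTH} I cannot invoke ``$h$ is increasing'' (the chart omits $0$, and $h_R$ is not monotone there), so the monotonicity must be read directly off the $\sec$/$\tan$ expressions, that is, from the curvature sign $K=-1$ through \eqref{incre}.

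With the pointwise estimate $\Delta\rho\ge c(\varrho_0)\,|\nabla\Theta|^2$ secured, the remainder is the integration machinery of Theorem~\ref{MAINTH}: mollify $\rho$ by $\rho_n=\varphi_n\circ\rho$ with $\varphi_n$ as in \eqref{posdif}, apply Green's formula on $\{r_1+1/n\le|z|\le r\}$, let $n\to\infty$ with Fatou to reach $\int_{|z|=r}\partial_r\rho\,ds\ge\int_{r_1\le|z|\le r}\Delta\rho\,d\mu$, insert the proposition bound $\int_{r_1\le|z|\le r}|\nabla\Theta|^2\,d\mu\ge 2\pi\log(r/r_1)$, and finally divide by $r$ and integrate over $[r_1,1]$ using the boundary values $\rho=\rho_1$ on $|z|=1$ and $\rho=\rho_0$ on $|z|=r_1$. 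This produces $2\pi(\rho_1-\rho_0)\ge\pi\,c(\varrho_0)\log^2 r_1$, which is \eqref{desired77}. I expect the main obstacle to be precisely the computation in the previous paragraph: carrying out the $\sec$/$\tan$ bookkeeping for $h_R$ cleanly and, above all, proving the monotonicity that licenses evaluating the coefficient at the inner radius, since for a core-geodesic ring this no longer follows from monotonicity of the density itself.
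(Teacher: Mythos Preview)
Your proposal is correct and follows exactly the route of the paper: identify the only change as the anchor $|z|=1$ in the distance function, observe that \eqref{drita}, \eqref{kalaj} and hence \eqref{mal} hold unchanged, bound the coefficient below at $\varrho_0$, and then run the mollification/Green/Fatou/\eqref{zero} machinery of Theorem~\ref{MAINTH} to reach the analogue of \eqref{similar}, namely \eqref{desired7}, after which the explicit $h_R$ formula gives \eqref{desired77}. You are in fact slightly more careful than the paper, which simply writes ``as in \eqref{similar}'' without flagging that the lower bound at $\varrho_0$ here cannot be justified via global monotonicity of $h_R$; your remedy --- reading the monotonicity of $g\,h_R(g^2)=\tfrac{\pi}{2\log R}\sec v$ and of the bracket (via \eqref{incre}) directly from the $\sec/\tan$ expressions on $(1,R)$ --- is exactly what is needed and is entirely straightforward.
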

\begin{proof}
The only part which is different from the proof of
Theorem~\ref{MAINTH} is the fact that
\begin{equation}
\rho =\int_{1}^{g(\rho)}{h_R(t^2)}dt.
\end{equation}
Now as in \eqref{similar}
\begin{equation}\label{desired7} \rho_1-\rho_0\geq {\varrho_0}
                               \left({h_R(\varrho_0^2)}/2
+{h_R'(\varrho_0^2)}\varrho_0^2\right) \log^2 {r_1}.
\end{equation}

The function $\omega$ is given by $$\omega(r)= 2 \mathrm{arctanh}
\left(\tan\frac{\pi\log r}{4\log
 R}\right).$$
 The rest of the proof is a standard calculation.
\end{proof}
It follows from \eqref{desired77} that, $$r_1\ge
r(\rho_0,\rho_1)>0,$$ however, if we fix $\rho_0$ then
$$\lim_{\rho_1\to \infty} r(\rho_0,\rho_1) = 0.$$ The previous fact, implies that
inequality \eqref{desired77} has a local character. It seems that
such a global estimate does not exist.

\begin{remark}
It follows from Theorem~\ref{MAINTH2} that: If $\Bbb U^\ast=\Bbb
U\setminus\{0\}$ is the punctured unit disc with hyperbolic metric,
and $A(1/R,R)$ is an annulus with hyperbolic metric, then there is
no surjective $\lambda_R$ harmonic diffeomorphism of $\Bbb U^\ast$
onto $A(\tau,\sigma)$, with $1/R<\tau<\sigma<R$. The question
arises, if there exists a $\lambda_R$ harmonic diffeomorphism of
$\Bbb U^\ast$ onto $A(1/R,R)$. For this problem we refer the
interested reader to \cite{vm}, \cite{ph}.
\end{remark}

As in the Riemann case, we are going to find examples of radial
hyperbolic harmonic maps.  We put
$$w(z) = g(r) e^{i\varphi},$$ where $g$ is a increasing function to
be chosen. This will include all harmonic radial mappings. These
examples, will show that \eqref{desired30} is almost sharp and will
suggest a J. C. C. Nitsche type conjecture for hyperbolic harmonic
mappings.

Inserting \eqref{prima} and \eqref{seconda} into the hyperbolic
harmonic equation \eqref{eqpo}, we obtain

$$ r^2g'' + rg' - g + \frac{2g}{1- g^2} (r^2{g'}^2 - g^2) =
0.$$

Setting $y(x) = g(e^x)$, i.e. $r = e^x$, we obtain

$$y''  - y = \frac{2y}{ y^2-1} \left({y'}^2 - y^2\right),$$
where $y$ is given by

\begin{equation}\label{indef1}\int\frac{1}{\sqrt{y^2+c(1-y^2)^2}}dy = x +
c_1.\end{equation} Let $\sigma\le 1$. Then by
\begin{equation}\label{defin1}r = q_{c,\sigma}(y):=\exp\left(\int_{\sigma}^y\frac{1}{\sqrt{z^2+c(1-z^2)^2}}dz\right),\end{equation}
is given the inverse of the mapping $g$, satisfying the condition
$q_{c,\sigma}(\sigma)=1$.

The integrand in previous integral is real for $\tau\le y \le
\sigma$ if and only if $$c\ge - (1/\tau-\tau)^{-2}.
$$

Define the function $p_{\tau, \sigma}(y)$ by

$$r = p_{\tau, \sigma}(y) = \exp{\int_{\sigma}^y\frac{dz}{\sqrt{z^2 - (1/\tau-\tau)^{-2}(1-z^2)^2}}}.$$

It is extremal in the following sense $$p_{\tau, \sigma}(\tau)\le
q_{c,\sigma}(\tau)$$ for every $$c\ge - (1/\tau-\tau)^{-2}. $$

As $p_{\tau, \sigma}(y)$ is an increasing function for $\tau\le y
\le \sigma$, it follows that

$$w(z) = p_{\tau, \sigma}^{-1}(r)e^{i\varphi}$$ is a hyperbolic harmonic diffeomorphism  of the annulus
$A(p_{\tau, \sigma}(\tau),1)$ onto the annulus $A(\tau, \sigma)$.
Similarly, we can construct harmonic diffeomorphism with decreasing
$g(r)$.

Notice that $w(z) = q_{c, 1}^{-1}(r)e^{i\varphi}$, with $c=0$, is
the identity, and for $c>0$ is a hyperbolic harmonic diffeomorphism
between the annulus $A(r_{c, 1}(0),1)$ and the punctured unit disc
$A(0,1)$.

For every $\sigma\le 1$,
$$\lim_{\tau\to \sigma-0}p_{\tau, \sigma}(\tau) = 1.$$

{\bf Conjecture 2}. {\it If $\sigma\le 1$, and there exists a
hyperbolic harmonic mapping of the annulus $A(r,1)$ onto the annulus
$A(\tau,\sigma)$, then $r\ge p_{\tau, \sigma}(\tau)$.}

Let $s\in [0,1/2]$ and take $\tau = 1- 2s$ and $\sigma = 1 - s$.
Then our example states that

$$r \ge f_1(s) = p_{1-2s,1-s}(1-2s) = \exp{\int_{1-s}^{1-2s}\frac{dx}{\sqrt{x^2 - (1/(1-2s)-1+s)^{-2}(1-x^2)^2}}}.$$

Inequality~\eqref{desired3} asserts that
$$r\ge f_2(2) = \exp\left(-\sqrt{\frac{(1 - (1 - 2 s)^2)^2 \log\frac{2 - s}{1 -
s}}{(1 + (1 - 2 s)^2) (1 - 2 s)}}\right).$$ It can be verified that
if $s\in [0,1/2]$ then $f_1(s) \ge f_2(s)$. See figure~4, which
shows that inequality~\eqref{desired30} is almost sharp for these
cases.

\begin{figure}[htp]\label{poincare}
\centering
\includegraphics{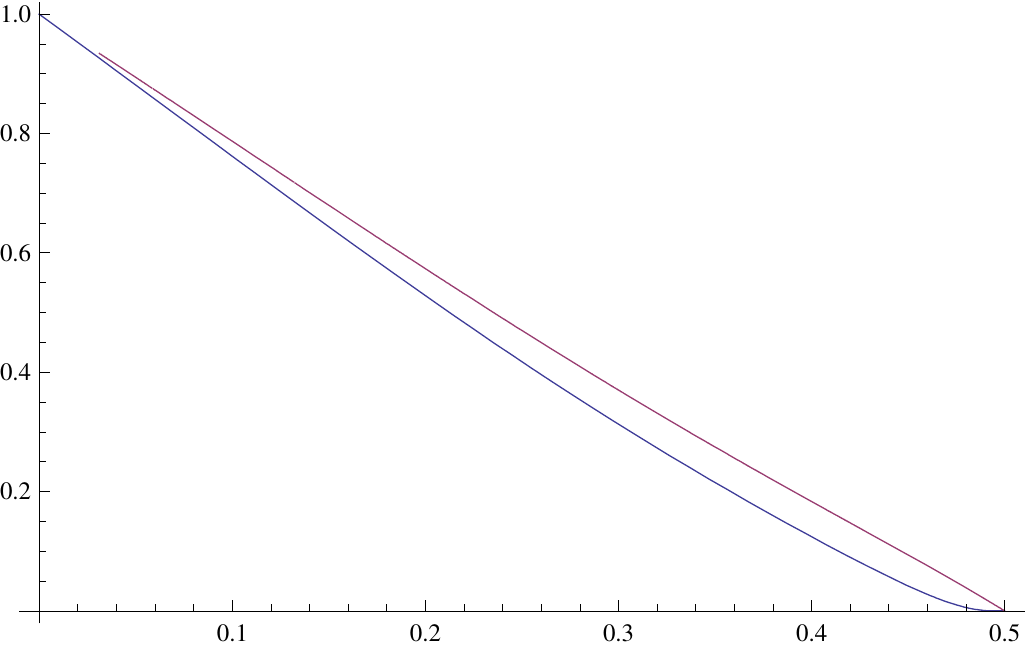}
\caption{The function bellow is $f_2$ and above is $f_1$.}
\end{figure}

\begin{remark}
Since, the Gauss map of a spacelike constant mean curvature
hypersurface of Minkowski space is a hyperbolic harmonic mapping
(\cite{ctr}), it seems that, a similar statement which corresponds
to Corollary~\ref{cor} holds in hyperbolic case as well.
\end{remark}
\subsection*{Acknowledgment}
The author thanks the referee for his helpful suggestions concerning
the presentation of this paper.

\begin{thebibliography}{1}
\bibitem{las}Ahlfors, L.; Sario L. {\it  Riemann Surfaces (1st ed.),}
Princeton, New Jersey: Princeton University Press, p. 204 (1960).

\bibitem{aimo}
Astala, K.; Iwaniec, T.; Martin, G. J.; Onninen, J. {\it Extremal
mappings of finite distortion.} Proc. London Math. Soc. (3)  91
(2005),  no. 3,655--702.

\bibitem{bh}
Bshouty, D.; Hengartner, W; {\it Univalent harmonic mappings in the
plane}, Ann. Univ. Mariae Curie-Sklodowska Sect. A XLVIII (1994).

\bibitem{bi}
Bshouty, D,; Weitsman, A. {\it On the Gauss map of minimal graphs.}
Complex Var. Theory Appl.  48  (2003),  no. 4, 339--346.

\bibitem{ctr}
Choi, H.; Treibergs, A. {\it Gauss maps of spacelike constant mean
curvature hypersurfaces of Minkowski space,} J. Differential
Geometry 32 (1990), 775-817.

\bibitem{ph} Collin, P.; Rosenberg, H.
{\it Construction of harmonic diffeomorphisms and minimal graphs},
arXiv:math/0701547v1.

\bibitem{const}
Dorfmeister, J.; Pedit, F.; Wu, H. {\it Weierstrass type
representation of harmonic maps into symmetric spaces.} Comm. Anal.
Geom. 6 (1998), no. 4, 633--668.

\bibitem{iko} Iwaniec, T; L. Kovalev; Onninen, J.:   {\it Harmonic mappings of an annulus, Nitsche conjecture and its
generalizations} arXiv:0903.2665 (March 2009).

\bibitem{jmaa}
Kalaj, D. {\it On the univalent solution of PDE $\Delta u=f$ between
spherical annuli.}  J. Math. Anal. Appl.  327  (2007),  no. 1,
1--11.

\bibitem{israel}
\bysame {\it On the Nitsche conjecture for harmonic mappings in
${\Bbb R}\sp 2$ and ${\Bbb R}\sp 3$.}  Israel J. Math.  150  (2005),
241--251.

\bibitem{mn}
\bysame {\it On the Nitsche's conjecture for harmonic mappings.}
Math. Montisnigri  14  (2001), 89--94.

\bibitem{km} Kalaj, D.; Mateljevi\'c, M.
{\it Inner estimate and quasiconformal harmonic maps between smooth
domains}, J. Anal. Math. {\bf 100} (2006), 117--132.

\bibitem{lv}
Lehto O.; Virtanen, K.I. {\it Quasiconformal mapping},
Springer-verlag, Berlin and New York, 1973.

\bibitem{lst}
Lerner, D; Sterling, I. {\it Holomorphic potentials, symplectic
integrators and CMC surfaces.} (English summary) Elliptic and
parabolic methods in geometry (Minneapolis, MN, 1994), 73–90, A K
Peters, Wellesley, MA, 1996.

\bibitem{Al}
Lyzzaik, A. {\it The modulus of the image of annuli under univalent
harmonic mappings and a conjecture of J. C. C. Nitsche.} J. London
Math. soc., (2) 64 (2001), pp. 369-384.

\bibitem{vm}
Markovi\'c, V. {\it Harmonic diffeomorphisms and conformal
distortion of Riemann surfaces}; Comm. Anal. Geom. 10 (2002),
p.847--876.

\bibitem{mt}
Milnor, T. {\it Harmonic maps and classical surface theory in
Minkowski $3$-space.}  Trans. Amer. Math. Soc.  280  (1983),  no. 1,
161--185.

\bibitem{n}
Nitsche, J.C.C. {\it On the modulus of doubly connected regions
under harmonic mappings}, Amer. Math. Monthly 69 (1962), 781-782.

\bibitem{ros}
 Ros, A. {\it The Gauss map of minimal surfaces.}
Differential geometry, Valencia, 2001, 235--252, World Sci. Publ.,
River Edge, NJ, 2002.

\bibitem{rv}
Ruh, E.; Vilms, J. {\it The tension field of the Gauss map.} Trans.
Amer. Math. Soc.  149  1970 569--573.

\bibitem{sco} G. Schober, {\it Planar harmonic mappings, Computational Methods
and Function Theory Proceedings}, Valparaiso, 1989, Lecture Notes in
Mathematics 1435 (Springer, 1990) 171-176.

\bibitem{sy}
Schoen, R.; Yau, S. T. {\it Lectures on harmonic maps.} Conference
Proceedings and Lecture Notes in Geometry and Topology, II.
International Press, Cambridge, MA, 1997. vi+394 pp.

\bibitem{ks}
Strebel, K. {\it Quadratic differentials}, Springer-Verlag, Berlin
Heidelberg, New York, Tokyo 1984.

\bibitem{h}
Han, Zheng-Chao,  {\it Remarks on the geometric behavior of harmonic
maps between surfaces. Chow, Ben (ed.) et al., Elliptic and
parabolic methods in geometry.} Proceedings of a workshop,
Minneapolis, MN, USA, May 23--27, 1994. Wellesley, MA: A K Peters.
57-66 (1996).

\bibitem{wan}
Wan, T. {\it Constant mean curvature surface, harmonic maps, and
universal Teichmüller space.} J. Differential Geom. 35 (1992), no.
3, 643--657.

\bibitem{aw}
Weitsman, A. {\it Univalent harmonic mappings of Annuli and a
conjecture of J.C.C. Nitsche,} Israel J. Math, 124(2001).

\end {thebibliography}
\end{document}